\newcommand{\nnum}{\nonumber}
\newcommand{\EQ}{\begin{eqnarray}}
\newcommand{\EN}{\end{eqnarray}}
\newcommand{\EQQ}{\begin{eqnarray*}}
\newcommand{\ENN}{\end{eqnarray*}}
\newcommand{\bremark}{\begin{remark} \begin{rm} }
\newcommand{\eremark}{ \end{rm} \rule{1mm}{2mm}
\end{remark} }
\newcommand{\btheorem}{\begin{theorem} \begin{rm} }
\newcommand{\etheorem}{ \end{rm} \rule{1mm}{2mm}
\end{theorem} }
\newcommand{\blemma}{\begin{lemma} \begin{rm} }
\newcommand{\elemma}{ \end{rm} \rule{1mm}{2mm}
\end{lemma} }
\newcommand{\bcorollary}{\begin{corollary} \begin{rm} }
\newcommand{\ecorollary}{ \end{rm} \rule{1mm}{2mm}
\end{corollary} }
\newcommand{\bdefinition}{\begin{definition}\begin{rm} }
\newcommand{\edefinition}{ \end{rm} \rule{1mm}{2mm}
\end{definition} }
\newcommand{\bproposition}{\begin{proposition} \begin{rm} }
\newcommand{\eproposition}{ \end{rm} \rule{1mm}{2mm}
\end{proposition} }
\newcommand{\bexample}{\begin{example} \begin{rm} }
\newcommand{\eexample}{ \end{rm} \rule{1mm}{2mm}
\end{example} }
\newcommand{\basm}{\begin{assumption} \begin{rm}}
\newcommand{\easm}{\end{rm} 
\end{assumption}}
\newcommand{\real}{\mathds{R}}
\newcommand{\iGame}{\text{iGame}}
\newcommand{\iGameStar}{\text{iGame$^*$}}
\newcommand{\EE}{\mathcal{E}}
\newcommand{\FF}{\mathcal{F}}
\newcommand{\GG}{\mathcal{G}}
\newtheorem{theorem}{\bf Theorem}[section]
\newtheorem{lemma}{\bf Lemma}[section]
\newtheorem{definition}{\bf Definition}[section]
\newtheorem{remark}{\bf Remark}[section]
\newtheorem{corollary}{\bf Corollary}[section]
\newtheorem{proposition}{\bf Proposition}[section]
\newtheorem{example}{\bf Example}[section]
\newtheorem{assumption}{\bf Assumption}[section]
\newcommand\oprocendsymbol{\hbox{$\bullet$}}
\newcommand\oprocend{\relax\ifmmode\else\unskip\hfill\fi\oprocendsymbol}
\date{}
\begin{document}

\title{Anytime computation algorithms for approach-evasion differential games}

\author{Erich Mueller$^{1,2}$, Minghui Zhu$^{1,3}$, Sertac Karaman$^2$, Emilio Frazzoli$^2$
\thanks{$^1$Equal contribution from both authors.}
\thanks{$^2$The authors are with Laboratory for Information and Decision Systems, Massachusetts
Institute of Technology, 77 Massachusetts Avenue, Cambridge, MA 02139, {\tt\small
\{emueller,sertac,frazzoli\}@mit.edu}.}
\thanks{$^3$M. Zhu is with the Department of Electrical Engineering, Pennsylvania State University, 201 Old Main, University Park, PA 16802, {\tt muz16@psu.edu}.}
\thanks{This research was supported in part by ONR Grant \#N00014-09-1-0751 and the Michigan/AFRL Collaborative Center on Control Sciences, AFOSR grant \#FA 8650-07-2-3744.}}

\maketitle

\begin{abstract} This paper studies a class of approach-evasion differential games, in which one player aims to steer the state of a dynamic system to the given target set in minimum time, while avoiding some set of disallowed states, and the other player desires to achieve the opposite. We propose a class of novel anytime computation algorithms, analyze their convergence properties and verify their performance via a number of numerical simulations. Our algorithms significantly outperform the multi-grid method for the approach-evasion differential games both theoretically and numerically. Our technical approach leverages incremental sampling in robotic motion planning and viability theory.\end{abstract}

\section{Introduction}

Recent advances in embedded computation and communication have boosted the emergence of cyber-physical systems. Cyber-physical systems are characterized by the strong coupling between the cyber space and the physical world. A number of cyber-physical systems; e.g., autonomous vehicles and the power grid, operate with time-varying computing resources in dynamically changing and uncertain operating environments. This feature demands that the controllers are synthesized in an \emph{anytime} and online fashion where a feasible controller is quickly returned and its optimality is continuously improved as more processing time is available.

A fundamental problem of cyber-physical systems is safety control; i.e., controlling dynamic systems to achieve the given objectives and simultaneously stay in the given safety sets. A possible formulation is the approach-evasion differential games where one player, say the angle, desires to steer the system states to the goal set as soon as possible and maintain the system states inside the safety set and the objective of the other player, say the demon, is completely opposite. There are mainly two classes of
numerical schemes for the approach-evasion differential games: one is based on viscosity solutions of partial differential
equations (PDEs); e.g., in~\cite{MB-ICD:97,MB-MF-PS:99,PES:99}, and the other is built on viability theory; e.g.,
in~\cite{JPA:09,JPA-AB-PSP:11,PC-MQ-PSP:99}. Both methods are built on the \emph{multi-grid} successive approximation (or coarse-to-refined) approach in; i.e.,~\cite{CSC-JNT:91} and the asymptotic optimality is provable. On the other hand, one can always replace the multi-grid method by the \emph{fixed-grid} method in the above papers where a grid with fixed resolution is chosen before implementing the algorithms. For this method, the approximation error is determined by the fixed gird resolution, and their relation could be very challenging to characterize, especially for the constrained nonlinear system of interest. In addition, a single predetermined gird is not capable of dealing with different scenarios in dynamically changing environments.

In the robotics society, a relevant problem of robotic motion planning has been receiving considerable attention. In robotic motion planning, a feasible, collision-free and open-loop path through a cluttered environment is found, connecting an initial configuration or state to a target region. It is well-known that the problem is computationally challenging~\cite{JHR:79}. The Rapidly-exploring Random Tree (RRT) algorithm and its variants; e.g.,
in~\cite{SML-JJK:00,SML-JJK:01}, are able to find a feasible path quickly. Recently, two novel algorithms, PRM$^*$ and
RRT$^*$, have been developed in~\cite{SK-EF:11}, and shown to be computationally efficient and asymptotically optimal.
The methodology of incremental sampling in~\cite{SK-EF:11,SML-JJK:00,SML-JJK:01} has been extended to stochastic optimal control
in~\cite{VH-SK-EF:12} and stochastic filtering in~\cite{PC-SK-EF:12}.

\emph{Contributions.} The paper proposes a class of novel anytime computation algorithms to solve the approach-evasion differential games, and our algorithms significantly outperform the multi-grid method both theoretically and numerically. In particular, we first propose the $\iGame$ Algorithm by leveraging incremental sampling and viability theory in~\cite{PC-MQ-PSP:99}. On the basis of random samples, a sequence of games discrete in the time, states and inputs are constructed to approximate the original game continuous in the time, states and inputs. At each iteration, the values of the state samples are updated \emph{only once} in an \emph{asynchronous} way. The asymptotic convergence of the $\iGame$ Algorithm is formally ensured for all the initial states in contrast to a single initial state in~\cite{PC-MQ-PSP:99}. We refer the readers to Section~\ref{sec:iSaddle-discussion} for a further detailed theoretic comparison with~\cite{PC-MQ-PSP:99}.

The new feature asynchronism of $\iGame$ offers a greater freedom for the online section of sample updates, and opens up a number of opportunities to improve the computational efficiency. We then propose the variation of the $\iGameStar$ Algorithm which explicitly exploits the asynchronism. In particular, $\iGameStar$ maintains a set of directed trees at each iteration and utilizes a novel \emph{cascade update rule}, only updating a subset of state samples whose child changes its estimate at the previous iteration. $\iGameStar$ maintains the same convergence property as $\iGame$.

Most importantly, through the homicidal-chauffeur differential game, we numerically demonstrate that $\iGame$ is faster than the multi-grid method in~\cite{PC-MQ-PSP:99} and $\iGameStar$ is significantly faster than $\iGame$. The anytime property of $\iGameStar$ is also shown in the numerical simulations.

Due to the space limitation, the analysis is omitted in the current paper and provided in the enlarged version~\cite{Mueller.Zhu.Karaman.Frazzoli:13}.

\emph{Literature review.} Among the limited number of differential games for which closed-form solutions have been derived are the homicidal-chauffeur and the lady-in-the-lake games ~\cite{MB-MF-PS:99,Isaacs:99} which are played in unobstructed environments. For more complicated games, numerical methods must be used to determine solutions. The PDE-based and viability-based methods are two main approaches to solve the approach-evasion differential games. For the PDE-based method in~\cite{MB-ICD:97,MB-MF-PS:99,PES:99}, the optimal value functions are characterized as the viscosity solutions of Hamilton-Jacobi-Isaacs equations, and existing numerical schemes for PDEs are applied to approximate the viscosity solutions. This method demands the continuity of the optimal value functions. Unfortunately, this assumption is restrictive, and does not hold for many such games of interest. The more recent viability-based approach in~\cite{JPA:09,JPA-AB-PSP:11,PC-MQ-PSP:99} instead characterizes the optimal value functions via discriminating kernels of the Hamiltonian, and further solve the games through approximating discriminating kernels. This method relaxes the continuity assumption on the
optimal value functions.

Another relevant set of references is about the reachability analysis of dynamic systems. In particular, the
papers~\cite{Lygeros:04,Mitchell.Bayen.Tomlin:05} study the reachability problem in the framework of Hamilton-Jacobi
equations. Numerical algorithms based on level-set methods, e.g., in~\cite{Sethian:96}, have been applied to
reachability computation; e.g., in~\cite{Mitchell.Bayen.Tomlin:04}.

\section{Problem formulation}

In this section, we will present the game formulation of interest and introduce a set of notations and primitive procedures. After that, we will summarize some existing results on viability theory.

\subsection{Problem formulation}

Consider a pair of players, say the angel and the demon. These two players control a dynamical system governed by the
following differential equation: \begin{align}\dot{x}(t) = f(x(t), u(t), w(t)), \label{e7}\end{align} where $x(t)\in
\mathcal{X}\subseteq\real^N$ is the state, and $u(t)\in\mathcal{U}$ (resp. $w(t)\in\mathcal{W}$) is the control of the
angel (resp. demon). For system~\eqref{e7}, the sets of admissible control strategies for players are defined as:
\begin{align*}&\mathcal{U} \triangleq \{u(\cdot) \; : \; [0,+\infty)\rightarrow U,\;{\rm measurable}\},\\ &\mathcal{W}
\triangleq \{w(\cdot) \; : \; [0,+\infty) \rightarrow W,\;{\rm measurable}\},\end{align*} where $U\subseteq
\real^{m_a}$ and $W\subseteq \real^{m_d}$. Let $m \triangleq m_a + m_d$. Denote by $\phi(\cdot;x,u,w) \triangleq
\{\phi(t;x,u,w)\}_{t\geq0}$ the solution to system~\eqref{e7} given the initial state $x$ and controls of $u$ and $w$.

Throughout this paper, we make the following assumption:
\begin{assumption} The following properties hold:
\begin{enumerate}
\item[(A1)] The sets $\mathcal{X}$, $U$ and $W$ are compact.
\item[(A2)] The function $f$ is continuous in $(x,u,w)$ and Lipschitz continuous in $x$ for any $(u,w)\in U\times W$.
\item[(A3)] For any pair of $x\in \mathcal{X}$ and $u\in U$, $F(x, u)$ is convex where the set-valued map $F(x,u) \triangleq \cup_{w\in W}f(x,u,w)$.
\end{enumerate}\label{asm1}
\end{assumption}

\begin{remark} A sufficient condition for (A3) in Assumption~\ref{asm1} is that the set of $W$ is convex and the function $f$ is affine with respect to $w$.\oprocend\end{remark}

We now proceed to describe the game of interest where the objectives of the angel and the demon are completely opposite. In particular, the angel desires to steer $x(t)$ to his open goal set $\mathcal{X}_{\rm goal}\subseteq\real^N$ as soon as possible and simultaneously keep $x(t)$ inside the closed constraint set $\mathcal{X}_{\rm free}\subseteq\real^N$. In contrast, the demon aims to ensure that the system states never reach $\mathcal{X}_{\rm goal}$ and instead leave $\mathcal{X}_{\rm free}$ quickly. As~\cite{NNK-AIS:87}, we will refer the game to as the time-optimal approach-evasion (TO-AE, for short) differential game\footnote{The papers~\cite{MB-MF-PS:99,PC-MQ-PSP:99,MF:06} refer the game we consider as the pursuit-evasion game. As~\cite{TB-GO:99}, we will refer the pursuit-evasion differential games to as those where the dynamics of two players are decoupled and one player aims to capture the other. The homicidal chauffeur game in Section~\ref{sec:simulation} is an example of the pursuit-evasion game. The games of interest can also be applied to robust optimal control where two players act on a single dynamic system.}. The aforementioned conflicting objectives are formalized as follows. Define by $t(x,u,w)$ the first time when the trajectory $\phi(\cdot;x,u,w)$ hits $\mathcal{X}_{\rm goal}$ while staying in $\mathcal{X}_{\rm free}$ before $t(x,u,w)$. More precisely, given the trajectory $\phi(\cdot;x,u,w)$, the quantity $t(x,u,w)$ is defined as follows: \begin{align*}t(x,u,w) \triangleq &\inf\{t\geq0 \; | \; \phi(t;x,u,w)\in \mathcal{X}_{\rm goal},\nnum\\
&\phi(s;x,u,w)\in \mathcal{X}_{\rm free},\;\forall\; s\in[0,t]\}.\end{align*} If $\phi(\cdot;x,u,w)$ leaves $\mathcal{X}_{\rm free}$ before reaching $\mathcal{X}_{\rm goal}$ or never reaches $\mathcal{X}_{\rm goal}$, then $t(x,u,w) = +\infty$. So the angel aims to minimize the hitting time $t(x,u,w)$, and the demon instead wants to maximize the same cost functional.

To define the value of the TO-AE differential game, we need the notion of nonanticipating or causal strategy in the
sense of Varaiya, Roxin, Elloitt and Kalton in~\cite{PV-RJE-ER-NJK:72}. The set $\Gamma^a$ of such strategies for the
angel is such that $\gamma^a\;:\;\mathcal{W}\rightarrow\mathcal{U}$ satisfies for any $T\geq0$, $\gamma^a(w(t)) =
\gamma^a(w'(t))$ for $t\in[0,T]$ if $w(t) = w'(t)$ for $t\in[0,T]$. The lower value of the TO-AE differential game is
given by: \begin{align}T^*(x) = \inf_{\gamma^a(\cdot)\in\Gamma^a}\sup_{w(\cdot)\in
\mathcal{W}}t(x,\gamma^a(w(\cdot)),w(\cdot)).\nnum\end{align} The function $T^*$ will be referred to as the minimum
time function. Note that $t(x,u,w)$ could be infinity and this may cause numerical issues. To deal with this, we
normalize the hitting time by the Kru\v{z}kov transform of $\Psi(r) = 1-e^{-r}$. With this nonlinear transform, we
further define the discounted cost functional $J(x,u,w) = \Psi\circ t(x,u,w)$, and the discounted lower value $v^*$ as
follows: \begin{align}v^*(x) = \inf_{\gamma^a(\cdot)\in\Gamma^a}\sup_{w(\cdot)\in
\mathcal{W}}J(x,\gamma^a(w(\cdot)),w(\cdot)).\nnum\end{align} It is easy to see that $v^*(x) = \Psi\circ T^*(x)$ for
$\forall x \in \mathcal{X}$. We will refer $v^*$ to as the optimal value function.

The objective of the paper is to design and analyze anytime algorithms to approximate $v^*$(or equivalently, $T^*$) and
further synthesize feedback control policies for the angle.

\subsection{Notations and primitive procedures}

Because of (A1) and (A2) in Assumption~\ref{asm1}, $M \triangleq \sup_{x\in \mathcal{X},\; u\in U,\; w\in W}f(x,u,w)$ is well-defined. Let $\ell$ be the Lipschitz constant of $f$ with respect to $x$ for any $(u,w)\in U\times W$. Let $\mu(S)$ be the Lebesgue measure of the set $S$. Let $C_N \triangleq\frac{\pi^{\frac{N}{2}}}{(\frac{N}{2})!}$ be the volume of the unit ball in $\real^N$. Let $\mathcal{B}(x,r)$ be the closed ball centered at $x$ with radius $r$. We will use the short hand $\mathcal{B} = \mathcal{B}(0,1)$ for the unit ball. Define the norms $\|v\|_S \triangleq \sup_{x\in S}\|v(x)\|$ and $\|v - \bar{v}\|_S \triangleq \sup_{x\in S}\|v(x) - \bar{v}(x)\|$ for $v, \bar{v} : S \rightarrow \real$. Denote $\gamma\triangleq \frac{C_N D_s^N}{\mu(\mathcal{X}_{\rm free})}$ where $D_s > 0$ is chosen such that $\gamma > 2$.



The procedure ${\tt Sample}(S,n)$: return $n$ states which are uniformly and independently sampled from the set $S$.


%

\subsection{Preliminary on viability theory}\label{sec:preliminary-viability}

First of all, let us define the set-valued map $\Phi : \mathcal{X} \times \real \times U \rightarrow \mathcal{X}\times \real$ by: for
$x\notin \mathcal{X}_{\rm goal}$, $\Phi(x,\varpi,u) = F(x,u)\times\{-1\}$; otherwise, $\Phi(x,\varpi,u) =
\overline{Co}[\{F(x,u)\times\{-1\}\}\cup\{(0_n,0)\}]$ where $F(x,u) \triangleq \cup_{w\in W}f(x,u,w)$ and $\overline{Co}[A]$ is the convex hull of set $A$. The map $\Phi$ is
the set-valued version of~\eqref{e7} by taking into account all the controls of the demon. In addition, $\Phi$ is
augmented by the scalar variable $\varpi$ representing the time instant. If the state $x$ does not reach $\mathcal{X}_{\rm
goal}$, then the time $\varpi$ decreases at a rate $1$; otherwise, it would stop. The combination of Theorem 5.2 and Lemma
5.4 in~\cite{PC-MQ-PSP:99} shows that the epigraph of $T^*$ is identical to the viability domain for the set-valued map
$\Phi$. That is, if $\Phi$ starts from the initial state $(x(0),\varpi(0))$ with $T^*(x(0))\leq \varpi(0)$, then there is
$u(\cdot)$ which is able to steer the state $x(t)$ to the goal set $\mathcal{X}_{\rm goal}$ no later than $T^*(x(0))$
and hence $\varpi(T^*(x(0))) - T^*(x(0)) \geq 0$. By utilizing this characterization, the authors in~\cite{PC-MQ-PSP:99}
propose a multi-grid successive approximation algorithm on Page 232 to asymptotically reconstruct $T^*$ via a sequence
of fully discretized dynamics of $\Phi$. The readers are referred to~\cite{JPA:09,JPA-AB-PSP:11,PC-MQ-PSP:99} for a
detailed discussion on viability theory for optimal control and differential games.

\section{The $\iGame$ Algorithm}

In this section, we will develop the $\iGame$ Algorithm which integrates incremental sampling
in~\cite{SK-EF:11,SML-JJK:00,SML-JJK:01} and viability theory in~\cite{PC-MQ-PSP:99}. We will also characterize its asymptotic convergence
properties. The main notations used in this section are summarized in Table~\ref{table-alg1}.

\begin{table}[ht]
\caption{Basic notations}
\label{ta:basic}
\centering
\begin{tabular}{|c|l|}
\hline
$y_n \in \mathcal{X}$ & $
\begin{array}{l}
\text{the new sample obtained at iteration $n$}
\end{array}
$
\tabularnewline
\hline
$S_n \subset \mathcal{X}$ & $
\begin{array}{l}
\text{the set of samples obtained up to iteration $n$}
\end{array}
$
\tabularnewline
\hline
$d_n > 0$ & $
\begin{array}{l}
\text{the space discretization size of $S_n$}
\end{array}
$
\tabularnewline
\hline
$h_n > 0$ & $
\begin{array}{l}
\text{the time discretization size at iteration $n$}
\end{array}
$
\tabularnewline
\hline
$\kappa_n \triangleq h_n - d_n$ & $
\begin{array}{l}
\text{the approximate time discretization}
\end{array}
$
\tabularnewline
\hline
$\alpha_n$ & $
\begin{array}{l}
\text{the dilation size at iteration~$n$}
\end{array}
$
\tabularnewline
\hline
$u_n : S_n \rightarrow U$ & $
\begin{array}{l}
\text{the controls for the states in $S_n$}
\end{array}
$
\tabularnewline
\hline
$v_n : S_n \rightarrow [0,1]$ & $
\begin{array}{l}
\text{the discrete value function for the states in $S_n$}
\end{array}
$
\tabularnewline
\hline
$\tilde{v}_n : \mathcal{X} \rightarrow [0,1]$ & $
\begin{array}{l}
\text{the interpolated function of $v_n$}
\end{array}
$
\tabularnewline
\hline
\end{tabular}\label{table-alg1}
\end{table}

\subsection{Algorithm statement}

The $\iGame$ Algorithm starts with an initial sample set $S_0$ and arbitrary initial state $v_0(x)$ in $[0,1]$ for $x \in \mathcal{X}_{\rm goal}\cup \mathcal{X}_{\rm free}$\footnote{The initial state $v_0(x)$ could be any value in $[0,1]$. It is in contrast to $v_0(x)$ must be equal $0$ in~\cite{PC-MQ-PSP:99}. For the simplicity, we generate the initial values by random sampling in the $\iGame$ Algorithm.} and $v_0(x) = 1$ otherwise. At each iteration $n$, a point $y_n$ is uniformly sampled from $\mathcal{X}$, and added into the sample set; i.e., $S_n = S_{n-1}\cup\{y_n\}$. Then the algorithm computes the dispersion $d_n$ such that for any $x\in \mathcal{X}_{\rm free}$, there exists $x'\in S_n$ such that $\|x - x'\| \leq d_n$. The quantity $d_n$ can be viewed as the resolution of the finite grid generated by $S_n$. Based on $d_n$, the algorithm sets the time discretization size $h_n$ such that $d_n = o(h_n)$. Here we choose $h_n = d_n^{\frac{1}{1+\alpha}}$ where $\alpha$ could be any positive scalar. Denote $\kappa_n \triangleq h_n - d_n$ and $R_n$ be an integer lattice on $\real$ consisting of segments of length $d_n$.

With the above set of parameters, we construct the discretization $\Phi_n$ of $\Phi$ as~\cite{PC-MQ-PSP:99}: for $x\in S_n\setminus\mathcal{B}(\mathcal{X}_{\rm goal}, Mh_n+d_n)$, $\Phi_n(x,y,u) = \{x + h_n F(x,u) + \alpha_n\mathcal{B}\}\times\{y-h_n+[-d_n,d_n]\}\cap (S_n\times R_n)$; otherwise, $\Phi_n(x,y,u) = \overline{Co}[\{x + h_n F(x,u) + \alpha_n\mathcal{B}\}\times\{y-h_n+[-d_n,d_n]\cup\{x,y\}\}]\cap (S_n\times R_n)$, where the dilation size $\alpha_n = 2d_n + \ell h_n d_n + M\ell h_n^2$. As shown in~\cite{PC-MQ-PSP:99}, $\Phi_n$ is a good approximation of the continuous counterpart $\Phi$ in Section~\ref{sec:preliminary-viability} and the choice of $\alpha_n$.

We then solve the discretized game associated with $\Phi_n$ for \emph{only one} step. Towards this end, let $v_{n-1} : S_{n-1}\rightarrow[0,1]$ to be the estimate of the value function at iteration $n-1$. After obtaining $y_n$, we need to initialize the estimate, say $\tilde{v}_{n-1}$, on the new grid $S_n$. The point $y_n$ does not bring any new information about $v^*$, and the estimates on $S_{n-1}$ should not be affected by $y_n$. Thus, we choose  $\tilde{v}_{n-1}(x) = v_{n-1}(x)$ for $x\in S_{n-1}$ and $\tilde{v}_{n-1}(y_n) = 1$.


After initializing $\tilde{v}_{n-1}$, we execute \emph{a single} value iteration on each state in $K_n$, a subset of $S_n$, by performing the discrete operator $F_n$ on $\tilde{v}_{n-1}$ and get the estimate $v_n$. The discrete operator $F_n$ is defined as follows: for $x\in S_n\setminus \mathcal{B}(\mathcal{X}_{\rm goal},M h_n + d_n)$,
\begin{align}&F_n\circ\tilde{v}_{n-1}(x) = 1 - e^{-\kappa_n}\nnum\\
&+ e^{-\kappa_n}\displaystyle{\max_{w\in W}\min_{u\in U_n, y\in \mathcal{B}(x+h_n f(x,u,w),\alpha_n)\cap S_n}}
\tilde{v}_{n-1}(y),\nnum\end{align} and, for $x \in S_n\cap\mathcal{B}(\mathcal{X}_{\rm goal},M h_n + d_n)$, $F_n\circ\tilde{v}_{n-1}(x) = \tilde{v}_{n-1}(x)$. The discrete operator $F_n$ is derived from $(33)$ in~\cite{PC-MQ-PSP:99} by performing the Kru\v{z}kov transform. It will be shown that there is a unique fixed point $v^*_n : S_n \rightarrow [0,1]$ of $\FF_n$ where $v^*_n(x) = \tilde{v}_{n-1}(x)$ for $S_n\cap\mathcal{B}(\mathcal{X}_{\rm goal},M h_n + d_n)$.

The $\iGame$ Algorithm is formally stated in Algorithm~\ref{algorithm:iSaddle}. Here, we would like to mention that due to the Kru\v{z}kov transform, all the computations $v^*$ can be equivalently performed in terms of the estimates of $T^*$. However, the unboundedness of $T^*$ may incur computational issues.

\IncMargin{0.05in}

\begin{algorithm}[h] \small

\For{$x\in S_0 \subset \mathcal{X}$}{$v_0(x) = {\tt Sample}([0,1])$\;}
$n \leftarrow 1$\;

\While{$n < K$}{
$y_n \leftarrow {\tt Sample}(\mathcal{X},1)$\;
$S_n \leftarrow S_{n-1}\cup\{y_n\}$\;
$h_n \leftarrow d_n^{\frac{1}{1+\alpha}}$\;
$\alpha_n \leftarrow 2d_n + \ell h_n d_n + M\ell h_n^2$\;

\For{$x\in S_{n-1}$}{$\tilde{v}_{n-1}(x) = v_{n-1}(x)$\;}
$\tilde{v}_{n-1}(y_n) = 1$\;

\For{$x\in K_n \subseteq S_n\setminus\mathcal{B}(\mathcal{X}_{\rm goal},M h_n + d_n)$}{$(v_n(x),u_n(x)) \leftarrow {\rm \tt{VI}}(S_n,\tilde{v}_{n-1})$\;}

\For{$x\in S_n\setminus\big(K_n\cup\mathcal{B}(\mathcal{X}_{\rm goal},M h_n + d_n)\big)$}{$\displaystyle{v_n(x) = \min_{y\in\mathcal{B}(x,\alpha_{n-1})\cap S_{n-1}}v_{n-1}(y)}$\;}

\For{$x\in S_n\cap\mathcal{B}(\mathcal{X}_{\rm goal},M h_n + d_n)$}{$v_n(x) = \tilde{v}_{n-1}(x)$\;}
}
\caption{The $\iGame$ Algorithm}
\label{algorithm:iSaddle}
\end{algorithm}

\IncMargin{0.05in}

\begin{algorithm}[h] \small

$U_n \leftarrow U_{n-1} \cup {\tt Sample}(U,1)$\;

$v_n(x) \leftarrow 1 - e^{-\kappa_n} + e^{-\kappa_n}\displaystyle{\max_{w\in W}\min_{u\in U_n}\min_{y\in\mathcal{B}(x+h_nf(x,u,w),\alpha_n)\cap S_n}\tilde{v}_{n-1}(y)}$\;

$u_n(x) \leftarrow $ the solution to $u$ in the above step\;
\caption{${\rm \tt{VI}}(S_n,\tilde{v}_{n-1})$}
\label{algorithm:VI}
\end{algorithm}

\subsection{Performance analysis}

In this section, we analyze the asymptotic convergence properties of the iGame algorithm.

For the discrete fixed point $v^*_{n-1}$, we define the interpolated fixed point $\tilde{v}^*_{n-1} : S_n \rightarrow
[0,1]$ as follows: \begin{align*}\tilde{v}^*_{n-1}(x) = v^*_{n-1}(x),\quad x\in S_{n-1},\quad \tilde{v}^*_{n-1}(y_n) =
1.\end{align*} The following lemma shows the interpolation is consistent.

\begin{lemma} The following holds with probability one: \begin{align*}\|\min_{y\in\mathcal{B}(x,\alpha_n)\cap S_n}\tilde{v}_{n-1}^*(y) - \min_{z\in\mathcal{B}(x,\alpha_n)\cap S_n}v_n^*(z)\|_{\mathcal{X}}\rightarrow0.\end{align*}\label{lem3}\end{lemma}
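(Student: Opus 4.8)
The plan is to show that the two functions $x \mapsto \min_{y\in\mathcal{B}(x,\alpha_n)\cap S_n}\tilde{v}_{n-1}^*(y)$ and $x \mapsto \min_{z\in\mathcal{B}(x,\alpha_n)\cap S_n}v_n^*(z)$ agree at every $x \in \mathcal{X}$ as soon as $n$ is large enough, and hence the sup-norm of their difference is exactly $0$ for all large $n$. The key observation is that $\tilde{v}_{n-1}^*$ and $v_n^*$ differ \emph{only} on the single new sample $y_n$: by definition $\tilde{v}_{n-1}^*(x) = v^*_{n-1}(x) = v_n^*(x)$ for $x \in S_{n-1}$ outside $\mathcal{B}(\mathcal{X}_{\rm goal}, Mh_n+d_n)$ (where $v_n^*$ is the fixed point of $\mathcal{F}_n$ restricted to $S_n$, which equals the prior estimate on the goal neighborhood), while $\tilde{v}_{n-1}^*(y_n) = 1 \ge v_n^*(y_n)$. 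So the only way the two minima over $\mathcal{B}(x,\alpha_n)\cap S_n$ can disagree at a given $x$ is if $y_n \in \mathcal{B}(x,\alpha_n)$ and $y_n$ is (after the value iteration) a strict argmin, i.e. $v_n^*(y_n) < \min_{z \in (\mathcal{B}(x,\alpha_n)\cap S_n)\setminus\{y_n\}} v_n^*(z)$.

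First I would make this precise: for any fixed $x$, $\bigl|\min_{y\in\mathcal{B}(x,\alpha_n)\cap S_n}\tilde{v}_{n-1}^*(y) - \min_{z\in\mathcal{B}(x,\alpha_n)\cap S_n}v_n^*(z)\bigr| \le \bigl|\tilde{v}_{n-1}^*(y_n) - v_n^*(y_n)\bigr| \cdot \mathds{1}[\,y_n \in \mathcal{B}(x,\alpha_n)\,] \le \mathds{1}[\,y_n\in\mathcal{B}(x,\alpha_n)\,]$, since both functions are bounded in $[0,1]$ and identical away from $y_n$. Taking the supremum over $x$, the left-hand side is bounded by $\mathds{1}[\exists x: y_n\in\mathcal{B}(x,\alpha_n)]$, which is trivially $1$ — so a pointwise bound alone is not enough, and one must argue that the \emph{worst-case} $x$ cannot sustain the discrepancy. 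The sharper statement is: the discrepancy at $x$ is at most $\bigl(\tilde{v}_{n-1}^*(y_n) - v_n^*(y_n)\bigr)^+$ when $y_n$ is the unique minimizer in the ball, and $0$ otherwise, and crucially the fixed-point operator $\mathcal{F}_n$ recomputes $v_n^*(y_n)$ by looking one step forward along the dynamics, so $v_n^*(y_n) = 1 - e^{-\kappa_n} + e^{-\kappa_n}(\cdots) \to$ a value that is itself a min over a neighboring ball; in particular $|\tilde v^*_{n-1}(y_n) - v^*_n(y_n)| \le 1 - e^{-\kappa_n} + e^{-\kappa_n}\cdot(\text{something} \le 1)$ does not immediately shrink, so the real content is that $y_n$ being an isolated strict minimizer in $\mathcal{B}(x,\alpha_n)\cap S_n$ becomes a probability-zero event in the limit.

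The mechanism for that is the density of the sample set together with the fact that $\alpha_n = 2d_n + \ell h_n d_n + M\ell h_n^2 \to 0$ but $\alpha_n/d_n \to 2$: as $n\to\infty$, $d_n \to 0$ almost surely (standard for uniform i.i.d.\ sampling, using $\gamma = C_N D_s^N/\mu(\mathcal{X}_{\rm free}) > 2$ to get the covering), so each ball $\mathcal{B}(x,\alpha_n)$ of radius $\approx 2d_n$ contains, with high probability, many other samples of $S_n$ besides $y_n$, and by the continuity of $v^*$ (equivalently $T^*$, via the viability characterization of Section~\ref{sec:preliminary-viability} and the Kru\v{z}kov transform) and the consistency of the discretization $\Phi_n$ from~\cite{PC-MQ-PSP:99}, the fixed-point values $v_n^*$ on nearby samples differ by $O(\alpha_n) + o(1)$, so $y_n$ cannot be an isolated strict minimizer by a non-vanishing margin. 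Thus for every $\varepsilon > 0$, $\sup_x \bigl|\min_{y}\tilde{v}_{n-1}^*(y) - \min_{z}v_n^*(z)\bigr| < \varepsilon$ for all $n$ large enough, almost surely. I would carry out the steps in this order: (i) reduce the sup-norm difference to the behavior at $y_n$ inside balls containing $y_n$; (ii) invoke $d_n \to 0$ a.s.\ and $\alpha_n/d_n \to 2$ to guarantee each such ball also contains old samples; (iii) use Lipschitz/continuity estimates on $v_n^*$ across the grid (from the $\Phi_n$-consistency results of~\cite{PC-MQ-PSP:99}) to bound $v_n^*(y_n)$ below by $\min_{z \in (\mathcal{B}(x,\alpha_n)\cap S_{n-1})} v_n^*(z) - o(1)$; (iv) conclude the discrepancy is $o(1)$ uniformly in $x$. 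The main obstacle is step (iii): controlling the one-step fixed-point value at the freshly inserted $y_n$ relative to its neighbors requires carefully combining the uniform bound on $\|\tilde v_{n-1}^* - v^*\|_{\mathcal X}$-type estimates with the $\alpha_n$-dilation so that the "look-ahead" set $\mathcal{B}(y_n + h_n f(y_n,u,w),\alpha_n)\cap S_n$ is non-empty and well-populated — this is exactly where the choice $h_n = d_n^{1/(1+\alpha)}$ (so $d_n = o(h_n)$) and the definition of $\alpha_n$ are used, and it mirrors the consistency argument already established for $\Phi_n$ in~\cite{PC-MQ-PSP:99}.
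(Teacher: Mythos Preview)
Your proposal rests on a false premise: you assert that $\tilde v^*_{n-1}$ and $v^*_n$ agree on $S_{n-1}$ and differ only at the new sample $y_n$. The equality $\tilde v^*_{n-1}(x)=v^*_{n-1}(x)$ on $S_{n-1}$ is indeed definitional, but $v^*_{n-1}(x)=v^*_n(x)$ on $S_{n-1}$ is not. These are fixed points of \emph{different} contractions $\mathcal F_{n-1}$ and $\mathcal F_n$ (different time step $h_n$, different dilation $\alpha_n$, different underlying grid), and inserting one sample perturbs the fixed point \emph{globally}, not just at $y_n$. Your reduction ``the discrepancy is supported at $y_n$'' therefore fails at the outset, and steps (i)--(iv) never get started. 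A secondary problem: you invoke continuity of $v^*$, but only lower semicontinuity is available (Theorem~\ref{the2}); the fence-escape game in Section~\ref{sec:simulatuin-fence} has a genuinely discontinuous value function, so no Lipschitz or modulus-of-continuity estimate on $v^*_n$ of the kind your step (iii) needs can hold uniformly.

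The paper's route is structurally different and avoids comparing $v^*_{n-1}$ with $v^*_n$ pointwise. First (Lemma~\ref{lem6}), since $\tilde v^*_{n-1}(y_n)=1$ is the maximal possible value, $y_n$ can never realize the minimum over a ball that also contains a sample of $S_{n-1}$; Lemma~\ref{lem4} guarantees such a sample exists whenever $y_n\in\mathcal B(x,\alpha_n)$. Hence $\min_{\mathcal B(x,\alpha_n)\cap S_n}\tilde v^*_{n-1}=\min_{\mathcal B(x,\alpha_n)\cap S_{n-1}}v^*_{n-1}$ \emph{exactly}, for every $x$. What remains is to compare $\min_{\mathcal B(x,\alpha_n)\cap S_{n-1}}v^*_{n-1}$ with $\min_{\mathcal B(x,\alpha_n)\cap S_n}v^*_n$, i.e.\ two different fixed points on two different grids. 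This is done not by any direct estimate between them but by passing through the common limit $v^*$: (T2) of Lemma~\ref{lem1} reduces the $\alpha_n$-ball minima to $d_{n-1}$- and $d_n$-ball minima, and then (Q2) of Lemma~\ref{lem5} (the consistency result from~\cite{PC-MQ-PSP:99}) sends both to $v^*(x)$.
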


It follows from Lemma~\ref{lem3} and the boundedness of $D$ that \begin{align*}&\sum_{\tau=1}^{D+1}\|\min_{y\in\mathcal{B}(x,\alpha_{n-\tau})\cap S_{n-\tau}}\tilde{v}_{n-\tau}^*(y)\\
&- \min_{z\in\mathcal{B}(x,\alpha_{n-\tau})\cap S_{n-\tau}}v_{n-\tau}^*(z)\|_{\mathcal{X}} \rightarrow 0.\end{align*} So there always exist a non-negative and non-increasing sequence $\{\gamma_n\}$ and a constant $C > 0$ such that \begin{align}&\sum_{\tau=0}^D\|\min_{y\in\mathcal{B}(x,\alpha_{n-\tau})\cap S_{n-\tau}}\tilde{v}_{n-\tau}^*(y)\nnum\\
&- \min_{z\in\mathcal{B}(x,\alpha_{n-\tau})\cap S_{n-\tau}}v_{n-\tau}^*(z)\|_{\mathcal{X}} \leq \gamma_n,\nnum\\
&\gamma_n\leq C \gamma_{n+1}.\label{e17}\end{align}

This following assumption requires that each state sample invokes the procedure $\tt VI$ at least once every $D+1$ iterations.

\begin{assumption} There is an integer $D\geq0$ such that for any $n\geq 1$, it holds that
$S_n\subseteq\cup_{s=0}^DK_{n+s}$.\label{asm2}
\end{assumption}

The following theorem is the core of this section and summarizes the convergence properties of the $\iGame$ algorithm.

\begin{theorem} Suppose that Assumptions~\ref{asm1} and~\ref{asm2} hold. Then the following properties hold for the $\iGame$ algorithm:
\begin{enumerate}
\item[(P1)] For any (small) $\epsilon>0$, it holds that:
\begin{align*}&\lim_{n\rightarrow+\infty}
\frac{1}{\gamma_n^{1-\epsilon}}\|v_n(x)-v_n^*(x)\|_{S_n}=0,\\
&\lim_{n\rightarrow+\infty}\frac{1}{\gamma_n^{1-\epsilon}}\nnum\\
&\big(\|\min_{y\in \mathcal{B}(x,\alpha_n)\cap S_n}v_n(y) - \min_{z\in \mathcal{B}(x,\alpha_n)\cap S_n}v_n^*(z)\|_{\mathcal{X}}\big) = 0.\end{align*}
\item[(P2)] The sequence $v_n$ converges to $v^*$ pointwise; i.e., for any $x\in \mathcal{X}_{\rm free}$, it holds that \begin{align*}v^*(x) = \lim_{n\rightarrow+\infty}\min_{y\in \mathcal{B}(x,d_n)\cap S_n}v_n(y).\end{align*} In addition, it holds that $\|v_n - v^*\|_{S_n} \rightarrow 0$.
\end{enumerate}\label{the1}
\end{theorem}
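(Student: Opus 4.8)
The plan is to establish (P1) first via a contraction-and-perturbation argument on the one-step value iteration, and then obtain (P2) as a consequence by comparing the discrete fixed points $v_n^*$ with $v^*$ using the viability-theoretic approximation result from \cite{PC-MQ-PSP:99}. For (P1), the key observation is that the operator $\FF_n$ is a contraction on $\ell^\infty(S_n)$ with modulus $e^{-\kappa_n}$ (since $\kappa_n = h_n - d_n > 0$ for $n$ large), because the only non-contractive part of $F_n$ is on $S_n \cap \mathcal{B}(\mathcal{X}_{\rm goal}, Mh_n + d_n)$, where $F_n$ acts as the identity but where $v_n^*$ is pinned to $\tilde v_{n-1}$. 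I would write the error recursion for $e_n(x) \triangleq v_n(x) - v_n^*(x)$. Each iteration does three things: (i) it re-initializes on the new sample $y_n$, replacing $v_{n-1}$ by $\tilde v_{n-1}$, and similarly $v_{n-1}^*$ by $\tilde v_{n-1}^*$; (ii) for $x \in K_n$ it applies one step of $\FF_n$; (iii) for $x \notin K_n$ it copies a $\min$ over the previous ball. The contribution of (i) is controlled by Lemma~\ref{lem3}, which says $\|\min_{y \in \mathcal{B}(x,\alpha_n) \cap S_n}\tilde v_{n-1}^*(y) - \min_{z \in \mathcal{B}(x,\alpha_n) \cap S_n} v_n^*(z)\|_{\mathcal{X}} \to 0$; the contribution of (ii) contracts the old error by $e^{-\kappa_n}$; and the contribution of (iii), combined across the at most $D+1$ iterations it takes (by Assumption~\ref{asm2}) for every sample to be refreshed by $\tt VI$, is bounded by the telescoped quantity in \eqref{e17}, i.e.\ by $\gamma_n$.

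Concretely, I expect a recursive bound of the shape
\begin{align}
\|e_{n+D+1}\|_{S_{n+D+1}} \le \Big(\prod_{\tau=1}^{D+1} e^{-\kappa_{n+\tau}}\Big)\|e_n\|_{S_n} + C'\,\gamma_n,\nnum
\end{align}
where the product of contraction factors is bounded away from $1$ once $h_n - d_n$ is bounded below over a window of $D+1$ steps (which holds since $d_n \downarrow 0$ and $h_n = d_n^{1/(1+\alpha)} \gg d_n$). Iterating this over blocks of length $D+1$ and using $\gamma_n \le C\gamma_{n+1}$ from \eqref{e17} (so the forcing term does not decay faster than the $\gamma$-scale), a standard lemma on linearly-contracting recursions with a slowly-varying forcing term yields $\|e_n\|_{S_n} = O(\gamma_n)$; dividing by $\gamma_n^{1-\epsilon}$ and letting $n \to \infty$ gives the first limit in (P1). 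The second limit in (P1) follows from the first by the elementary inequality $|\min_{y \in B} v_n(y) - \min_{z \in B} v_n^*(z)| \le \|v_n - v_n^*\|_{B}$ applied with $B = \mathcal{B}(x,\alpha_n) \cap S_n \subseteq S_n$, uniformly in $x \in \mathcal{X}$.

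For (P2), I would invoke the convergence of the fully-discretized viability scheme of \cite{PC-MQ-PSP:99}: as the space/time discretizations $d_n, h_n$ (and the dilation $\alpha_n = 2d_n + \ell h_n d_n + M\ell h_n^2 \to 0$) shrink to zero with $d_n = o(h_n)$, the interpolated discrete fixed points converge to $v^*$ in the sense that $\min_{y \in \mathcal{B}(x,d_n)\cap S_n} v_n^*(y) \to v^*(x)$ pointwise on $\mathcal{X}_{\rm free}$, and $\|v_n^* - v^*\|_{S_n} \to 0$ — this is where Theorem~5.2 / Lemma~5.4 of \cite{PC-MQ-PSP:99} (epigraph of $T^*$ = viability domain of $\Phi$) and the consistency of $\Phi_n$ with $\Phi$ enter; the fact that here samples are random rather than on a deterministic grid is handled by the dispersion estimate built into $d_n$ (so a grid of resolution $d_n$ is present almost surely), which also makes the statements hold "with probability one." Combining this with (P1), namely $\|v_n - v_n^*\|_{S_n} = O(\gamma_n) \to 0$, and a triangle inequality gives both $\|v_n - v^*\|_{S_n}\to 0$ and the pointwise statement $v^*(x) = \lim_n \min_{y\in\mathcal{B}(x,d_n)\cap S_n} v_n(y)$.

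The main obstacle is step (P1): carefully accounting for the asynchronous updates. Because only $K_n \subseteq S_n$ is refreshed by $\tt VI$ at iteration $n$, while the other samples merely inherit a $\min$ over a ball in the previous (coarser) sample set, one must track how a one-step $\FF_n$-contraction applied on a shifting subset still produces a net contraction over a window of $D+1$ iterations, and simultaneously how the sample set, the ball radii $\alpha_n$, and the interpolation operators all change within that window. Making the forcing terms from re-initialization (Lemma~\ref{lem3}) and from the "copy" steps telescope cleanly into the single quantity $\gamma_n$ of \eqref{e17}, while keeping all bounds uniform over $x \in \mathcal{X}$ and matching the precise $\gamma_n^{1-\epsilon}$ normalization, is the delicate bookkeeping that the proof must get right.
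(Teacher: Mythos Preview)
Your overall architecture---a contraction-and-perturbation recursion for $\beta_n \triangleq \|v_n - v_n^*\|_{S_n}$, with the asynchrony absorbed into blocks of length $D{+}1$ and the forcing term identified with $\gamma_n$ via Lemma~\ref{lem3} and~\eqref{e17}, followed by (P2) from the viability convergence of the discrete fixed points (Lemma~\ref{lem5})---is exactly the route the paper takes. The second limit in (P1) via the elementary $\min$-inequality and (P2) by triangle inequality with (Q2) are also how the paper closes.

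There is, however, a genuine gap in your quantitative step. You assert that ``the product of contraction factors is bounded away from $1$ once $h_n - d_n$ is bounded below over a window of $D{+}1$ steps (which holds since $d_n \downarrow 0$ and $h_n = d_n^{1/(1+\alpha)} \gg d_n$).'' This is false: $h_n = d_n^{1/(1+\alpha)} \to 0$, so $\kappa_n = h_n - d_n \to 0$ and hence $e^{-\kappa_n} \to 1$. The per-step (and per-block) contraction modulus is \emph{not} bounded away from $1$, and the ``standard lemma on linearly-contracting recursions'' does not apply. Consequently your conclusion $\|e_n\|_{S_n} = O(\gamma_n)$ is unsupported---and indeed the paper does not prove $O(\gamma_n)$ but only $o(\gamma_n^{1-\epsilon})$; the $\epsilon$ in the theorem is there precisely to absorb the vanishing contraction rate.

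What the paper does instead is: (i) use the sampling lower bound $d_n \ge d_s n^{-1/N}$ (Lemma~\ref{lem2}, part~(S2)) to obtain $\kappa_n \ge \eta\, n^{-1/(N(1+\alpha))}$, which, while tending to $0$, is \emph{not summable}; (ii) rescale by setting $\xi_n \triangleq \beta_n/\gamma_n^{1-\epsilon}$, which, thanks to $\gamma_n \le C\gamma_{n+1}$, yields a recursion of the form $\xi_{n+1} \le C' e^{-\kappa_{n+1}}(\xi_{n-D} + \gamma_{n-D}^{\epsilon})$ with a forcing term $\gamma_{n-D}^{\epsilon} \to 0$; and (iii) unroll this recursion and show $\xi_n \to 0$ by splitting the resulting sum $\sum_{\tau} e^{-\eta\sum_{s=\tau}^{n}s^{-1/(N(1+\alpha))}}\gamma_\tau^\epsilon$ at a threshold where $\gamma_\tau^\epsilon$ is small, using the non-summability of $\{s^{-1/(N(1+\alpha))}\}$ to kill the finite head and an integral comparison to bound the tail uniformly. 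In short, the missing ingredient in your argument is the replacement of a uniform contraction by the weaker---but sufficient---divergence $\sum_n \kappa_n = +\infty$, together with the $\gamma_n^{1-\epsilon}$ rescaling that makes the forcing term vanish.
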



\subsection{Discussion}\label{sec:iSaddle-discussion}

We would like to make a couple remarks on Theorem~\ref{the1}. During the first paragraph of this section, we use $D = 0$ corresponding to synchronous updates. In (P1) of Theorem~\ref{the1}, $\|\min_{y\in
\mathcal{B}(x,\alpha_n)\cap S_n}v_n(y) - \min_{z\in \mathcal{B}(x,\alpha_n)\cap S_n}v_n^*(z)\|_{\mathcal{X}}$ is the distance to
$v^*$ of $\iGame$, and the quantity $\gamma_n$ represents an over-approximation of the distance to $v^*$ of
the multi-grid algorithm in~\cite{PC-MQ-PSP:99}. The relation of $\gamma_n\leq C \gamma_{n+1}$
means that $\gamma_{n+1}$ cannot be arbitrarily small in comparison with $\gamma_n$. One example is that $\{\gamma_n\}$
decreases in an exponential rate $a > 0$; i.e., $\gamma_n = e^{-a n}\gamma_0$. For this case, $\gamma_{n+1} =
e^{-a}\gamma_n$. Another example is $\gamma_n = \frac{\log n}{n}$, and then $\frac{\gamma_n}{\gamma_{n+1}} = \frac{\log
n}{\log (n+1)}\frac{n+1}{n}\leq \frac{n+1}{n}\leq2$. For the cases with $\gamma_n\leq C \gamma_{n+1}$, (P1) of Theorem~\ref{the1} demonstrates that, in the asymptotical sense, $\iGame$ is as optimal as the multi-grid method in~\cite{PC-MQ-PSP:99}. Here, we would like to emphasize that $\iGame$ only executes once the $\tt VI$ procedure on each grid. In contrast, the algorithm in~\cite{PC-MQ-PSP:99} has to completely determine the fixed point on each grid before the grid refinement, and thus, it may be difficult to determine when to refine the grids. Thus, the aggregate computational complexity of \emph{synchronous} $\iGame$; i.e., $D=0$, is lower than that of that in~\cite{PC-MQ-PSP:99}. We will numerically demonstrate this fact in Section~\ref{sec:simulation-HC}.

The $\iGame$ algorithm is built on the multi-grid method on Page 232 in~\cite{PC-MQ-PSP:99}. Here we would like to point out several distinctions. Firstly, as~\cite{SK-EF:11,SML-JJK:00,SML-JJK:01}, $\iGame$ utilizes random sampling which scales well in high-dimension spaces in comparison with \emph{a priori} discretization in~\cite{PC-MQ-PSP:99}. Secondly, $\iGame$ only performs the discrete operator $F_n$ \emph{only once} on each grid. This helps reduce the aggregate computational complexity of $\iGame$ over that in~\cite{PC-MQ-PSP:99}. This issue has been elaborated. Thirdly, the procedure $\tt VI$ can be performed on a subset of $S_n$ in contrast to the whole set $S_n$ in~\cite{PC-MQ-PSP:99}. This asynchronism feature itself is new and, more importantly, will allow for a significant improvement in the convergence rate in the next section. Fourthly, $\iGame$ is shown to be globally converge; that is, that algorithm converges from any initial values of $v_0(x)\in[0,1]$ for $x\in S_0$. This is in contrast to the local convergence in~\cite{PC-MQ-PSP:99} where $v_0(x) = 0$ for $x\in S_0$.


\section{The $\iGameStar$ Algorithm: Cascade updates}\label{sec:iSaddle-star}

The new feature asynchronism of $\iGame$ offers a greater freedom for the online section of sample updates and opens up a number of opportunities to improve the computational efficiency. In this section, we will develop the variation of $\iGameStar$ which explicitly exploits the asynchronism.

To motivate $\iGameStar$, we first examine the behavior of $\iGame$. For each iteration, we define the directed graph $\GG_n \triangleq \{S_n, \EE_n\}$ such that $(x,y)\in \EE_n$ if $y\in \mathcal{B}(x, \alpha_n)\cap S_n$. That is, $v_n^*(x)$ is determined by the values of all $y$ such that $(x,y)\in\EE_n$.

In $\iGame$, the sample numbers are small for the initial iterations. So $v_{n}$ could approach $v_n^*(x)$ quickly. Let us assume this is the case at iteration~$n$; i.e., $v_n(x) \approx v_n^*(x)$ for $x\in S_n$. Due to the backward propagation of value iterations on $\GG_n$, the addition of a single point $y_n$ only affects the values of $v_{n+1}^*(x)$ for a subset of $S_n$. When $n$ is large, the variation $\|v_{n+1}^*(x)-v_n^*(x)\|$ could be small. If all the samples in $S_{n+1}$ are updated at iteration $n+1$, the decrease of the approximation error could be small. In contrast, a large amount of computation is required. The computational efficiency is particularly low when $n$ is large. This reveals that synchronous updates in $\iGame$ could waste a large amount of computation.

The above observation motivates the efficient utilization of asynchronism and thus $\iGameStar$ formally stated in Algorithm~\ref{algorithm:iSaddle-star}. In particular, $\iGameStar$ maintains a set of directed trees where the child $y$ of the sample $x$ is the one realizing the value of $x$ in $\tt{VI}^*$ at the last iteration. Note that the relation of parent-child is obtained as a byproduct of the procedure $\tt{VI}^*$. $\iGameStar$ employs the \emph{cascade update scheme} where the updates of the state samples are triggered if their children changes their estimates or they have not updated for $D$ consecutive iterations. That is, each new sample triggers a new round of value updates and the updates are performed backward along the paths of $\GG^*_n$. Instead of $\GG_n$, we use the subgraph $\GG^*_n\triangleq (S_n, \EE^*_n)$ of $\GG_n$ to define the parent-children relation since the generation of $\GG^*_n$ does not demand any additional computation.

\begin{figure}[h]
  \centering
  \includegraphics[width = .5\linewidth]{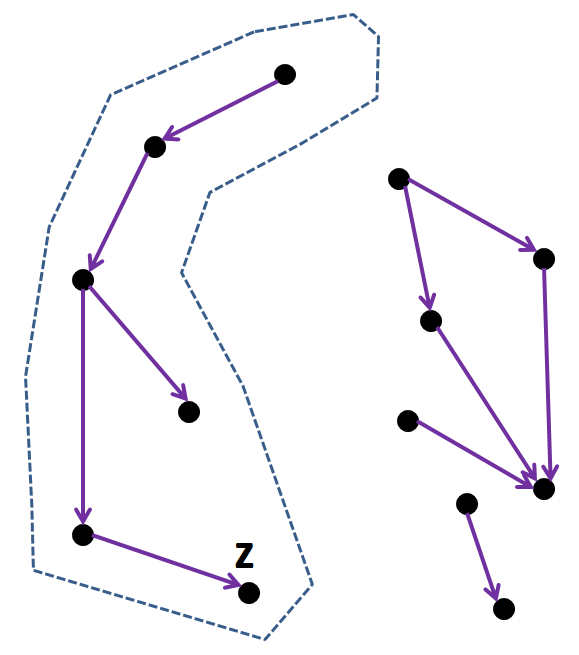}
  \caption{An illustrative example of the cascade update rule. The estimate change of state sample $z$ will trigger estimate updates backward on the tree containing $z$.} \label{fig2}
\end{figure}

\IncMargin{0.05in}

\begin{algorithm}[h] \small

\For{$x\in S_0 \subset \mathcal{X}$}{$v_0(x) = {\tt Sample}([0,1])$\;
${\tt Flag}(x) = 0$\;}
$n \leftarrow 1$\;

\While{$n < K$}{
$y_n \leftarrow {\tt Sample}(\mathcal{X},1)$\;
$S_n \leftarrow S_{n-1}\cup\{y_n\}$\;
$h_n \leftarrow d_n^{\frac{1}{1+\alpha}}$\;
$\alpha_n \leftarrow 2d_n + \ell h_n d_n + M\ell h_n^2$\;


\For{$x\in S_{n-1}$}{$\tilde{v}_{n-1}(x) \leftarrow v_{n-1}(x)$\;}
$\tilde{v}_{n-1}(y_n) \leftarrow 1$\;

$K_n \leftarrow \{y_n\}$\;

\For{$x\in S_{n-1}$}{\If{${\tt Flag}(x) == D$}{$K_n \leftarrow K_n \cup \{x\}$\;
Continue\;}
\For{${\tt Flag}({\tt Child}(x)) == 0$}{$K_n \leftarrow K_n \cup \{x\}$\;}}

\For{$x\in K_n\setminus\mathcal{B}(\mathcal{X}_{\rm goal},M h_n + d_n)$}{$(v_n(x),u_n(x),{\tt Child}(x)) \leftarrow {\tt VI}^*(S_n,\tilde{v}_{n-1})$\;
${\tt Flag}(x) \leftarrow 0$\;}

\For{$x\in S_n\setminus\big(K_n\cup\mathcal{B}(\mathcal{X}_{\rm goal},M h_n + d_n)\big)$}{$\displaystyle{z \leftarrow {\rm argmin}_{y\in\mathcal{B}(x,\alpha_{n-1})\cap S_{n-1}}v_{n-1}(y)}$\;
$v_n(x) \leftarrow v_{n-1}(z)$\;
${\tt Child}(x)\leftarrow z$\;
${\tt Flag}(x)\leftarrow{\tt Flag}(x) + 1$\;}

\For{$x\in S_n\cap\mathcal{B}(\mathcal{X}_{\rm goal},M h_n + d_n)$}{$v_n(x) \leftarrow \tilde{v}_{n-1}(x)$\;
${\tt Flag}(x) \leftarrow {\tt Flag}(x) + 1$\;}
}
\caption{The $\iGameStar$ Algorithm}
\label{algorithm:iSaddle-star}
\end{algorithm}

\IncMargin{0.05in}

\begin{algorithm}[h] \small

$U_n \leftarrow U_{n-1} \cup {\tt Sample}(U,1)$\;

$v_n(x) \leftarrow 1 - e^{-\kappa_n} + e^{-\kappa_n}\displaystyle{\max_{w\in W}\min_{u\in U_n}\min_{y\in\mathcal{B}(x+h_nf(x,u,w),\alpha_n)\cap S_n}\tilde{v}_{n-1}(y)}$\;

$(w_n(x),u_n(x)) \leftarrow$ the solution to $(w,u)$ in the above step\;

${\tt Child}(x) \leftarrow {\rm argmin}_{y\in\mathcal{B}(x+h_nf(x,u_n(x),w_n(x)),\alpha_n)\cap S_n}\tilde{v}_{n-1}(y)$;

\caption{${\rm \tt{VI}^*}(S_n,\tilde{v}_{n-1})$}
\label{algorithm:VI}
\end{algorithm}

$\iGameStar$ shares the same asymptotic convergence as $\iGame$, and the property is summarized as follows:

\begin{theorem} Suppose that Assumption~\ref{asm1} holds. Then the properties of (P1) and (P2) in Theorem~\ref{the1} hold for the $\iGameStar$ Algorithm.\label{the4}
\end{theorem}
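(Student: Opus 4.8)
The plan is to reduce the analysis of $\iGameStar$ to that of $\iGame$ by showing that the cascade update rule is a particular instance of the asynchronous update scheme already analyzed in Theorem~\ref{the1}. Concretely, I would verify that the family of update sets $\{K_n\}$ produced by $\iGameStar$ satisfies Assumption~\ref{asm2}: that is, there is an integer $D\ge0$ such that $S_n\subseteq\cup_{s=0}^{D}K_{n+s}$ for every $n\ge1$. This is almost immediate from the bookkeeping in Algorithm~\ref{algorithm:iSaddle-star}: the counter ${\tt Flag}(x)$ is incremented by one whenever $x$ is \emph{not} placed in $K_n$, and it is reset to $0$ exactly when $x\in K_n$; the branch \texttt{if}~${\tt Flag}(x)==D$ forces $x$ into $K_{n}$ before the counter can exceed $D$. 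Hence no sample can go more than $D$ consecutive iterations without being selected, which is precisely Assumption~\ref{asm2} with the same $D$. Once this is established, the discrete dynamics generated by $\iGameStar$ — namely the new point $y_n$ initialized to $1$, a single application of the discrete operator $F_n$ (equivalently $\tt{VI}^*$, which is $\tt{VI}$ augmented only by the byproduct bookkeeping of ${\tt Child}(\cdot)$) on the states in $K_n$, and the ``copy-the-best-neighbor'' rule $v_n(x)=\min_{y\in\mathcal{B}(x,\alpha_{n-1})\cap S_{n-1}}v_{n-1}(y)$ on the states outside $K_n$ — coincide exactly with the dynamics of $\iGame$. In particular the auxiliary quantities ${\tt Child}(x)$, ${\tt Flag}(x)$ and the subgraph $\GG^*_n$ do not enter the recursion for $v_n$ at all; they only determine \emph{which} set $K_n$ is used. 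Therefore the proof of Theorem~\ref{the1}, which is stated for an arbitrary $\{K_n\}$ obeying Assumption~\ref{asm2}, applies verbatim and yields (P1) and (P2) for $\iGameStar$.

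The one point that requires genuine care — and which I expect to be the main obstacle — is that in $\iGame$ the set $K_n$ is fixed in advance (it is part of the algorithm's specification), whereas in $\iGameStar$ the set $K_n$ is \emph{data-dependent}: it is built from the sample point $y_n$ and from whether ${\tt Child}(x)$ changed its estimate at iteration $n-1$. One must check that the convergence argument behind Theorem~\ref{the1} does not secretly use independence between $K_n$ and the history of the process, or any property of $K_n$ beyond the covering condition of Assumption~\ref{asm2}. I would argue this holds: the key estimates in Theorem~\ref{the1} are pathwise bounds — for each sample $x$, once it is updated at some iteration $m\in[n-D,n]$ its error is controlled by $\|v_{m-1}-v_{m-1}^*\|$ plus the interpolation-consistency term $\gamma_m$ from Lemma~\ref{lem3} and the telescoped bound~\eqref{e17}, and between updates the copy rule is non-expansive in the relevant sense — so the only structural fact used is that the ``gap'' window has length at most $D+1$. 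Since Lemma~\ref{lem3} and~\eqref{e17} hold with probability one regardless of how $K_n$ is chosen (they concern only the random samples $y_n$ and the resulting grids $S_n$, not the update schedule), the adaptivity of $K_n$ in $\iGameStar$ is harmless.

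Thus the proof would proceed in three short steps. First, I would state and prove the claim that $\iGameStar$ satisfies Assumption~\ref{asm2} with the design parameter $D$, by the ${\tt Flag}$-counter argument above. Second, I would observe that, conditional on the realized schedule $\{K_n\}$, the $v_n$-recursion of $\iGameStar$ is identical to that of $\iGame$, and that the contraction properties of $F_n$ and the non-expansiveness of the copy rule used in Theorem~\ref{the1} are independent of how $K_n$ was formed. Third, I would invoke Theorem~\ref{the1} to conclude that (P1) and (P2) hold for $\iGameStar$; note that, since $\iGameStar$ manifestly falls under the hypotheses once Assumption~\ref{asm2} is verified, only Assumption~\ref{asm1} needs to be assumed in the statement, as claimed. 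I would close with a remark that the cascade rule is what makes $K_n$ typically much smaller than $S_n$ — explaining the numerical speedup — without affecting the asymptotics, since asymptotically (for large $n$) the forced-update branch guarantees the full set is still covered every $D+1$ iterations.
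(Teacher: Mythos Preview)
Your proposal is correct and takes essentially the same approach as the paper: the paper's proof simply notes that Assumption~\ref{asm2} is enforced by $\iGameStar$ and then declares the proof identical to that of Theorem~\ref{the1}. Your version is more explicit about \emph{why} the ${\tt Flag}$ mechanism guarantees the covering condition and about the harmlessness of the data-dependent $K_n$, but the underlying reduction is the same.
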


\begin{proof} Note that Assumption~\ref{asm2} in Theorem~\ref{the1} is enforced. Thus the proof is identical to that of Theorem~\ref{the1}.
\end{proof}

\begin{remark} The cascade update rule is not unique. The one we choose in $\iGameStar$ is completely built on the computation results of the previous iteration and does not incur any additional computation effort. Intuitively, $\iGameStar$ is at least as fast as $\iGame$. It turns out that $\iGameStar$ is significantly faster than $\iGame$, shown in Section~\ref{sec:simulation}.\oprocend\label{rem3}
\end{remark}

\section{Simulation Results}\label{sec:simulation}

This section describes simulation experiments conducted to assess the performance of $\iGame$ and $\iGameStar$.

\subsection{Fence Escape Differential Game}\label{sec:simulatuin-fence}

The first set of results focus on a simple zero-sum differential game with a known analytic solution and a discontinuous value function, referred to here as fence escape. In this game, two agents, a pursuer and an evader, move along opposite sides of a straight fence segment extending from $x=0$ to $x=10$. The positions of the pursuer and evader are respectively denoted by $x_p$ and $x_e$. The agents can directly command their velocities, $u_p = \dot{x}_p ~~ u_e = \dot{x}_e$, which are both bounded in magnitude by 1. The game terminates when the evader passes either end of the fence, but
the evader is blocked by the pursuer and may not exit the fence region if $|x_e-x_p| \leq 1$. Formally, the goal set for the evader is:

\begin{equation*}
 \mathcal{X}_{\rm goal} \, : \; \; \; ((x_e < 0) \lor (x_e > 10)) \wedge (|x_e-x_p| > 1)
\end{equation*}

The objective of the evader is to reach this set in minimum time. The optimal value function for this game has a simple analytic solution shown in Figure~\ref{fig:fence_analytic_value}.

The evolution of the $\iGame$ value function approximation for the fence escape game is shown in Figure~\ref{fig:fence_IRVI_value}. Here we see the initial crude approximation to the value function become refined over time, as well as convergence to the discontinuous value function.

\begin{figure}[h]
  \centering
  \subfloat[100 points,
  111ms]{\label{fig:fence_IRVI_value100}\includegraphics[width=0.5\columnwidth]{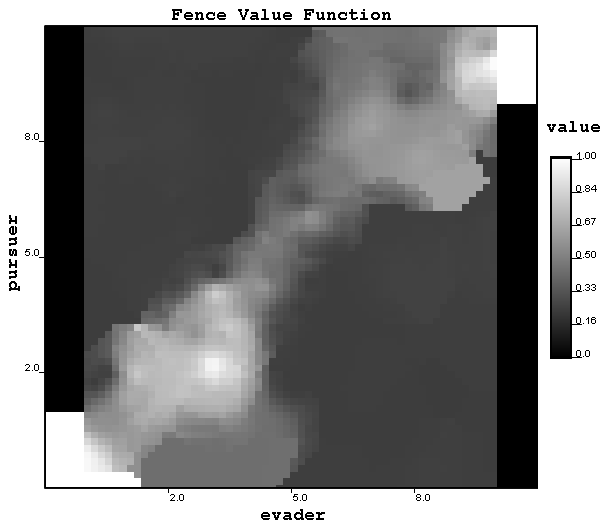}}
  ~
  \subfloat[500 points,
  848ms]{\label{fig:fence_IRVI_value500}\includegraphics[width=0.5\columnwidth]{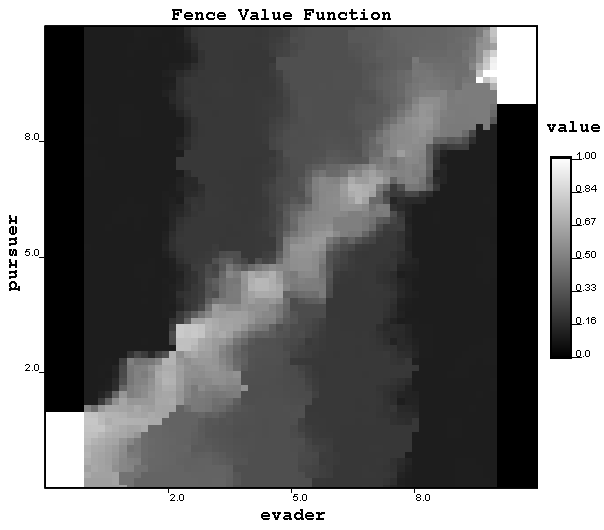}}
  \\
  \subfloat[1000 points,
  2.0s]{\label{fig:fence_IRVI_value1000}\includegraphics[width=0.5\columnwidth]{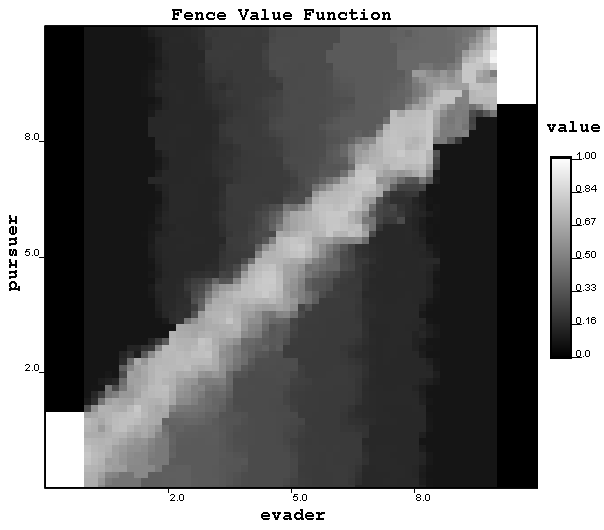}}
  ~
  \subfloat[2000 points,
  6.0s]{\label{fig:fene_IRVI_value2000}\includegraphics[width=0.5\columnwidth]{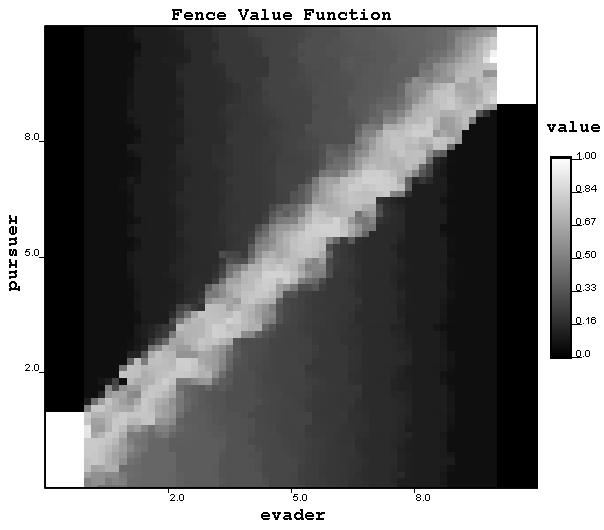}}
  \\
  \subfloat[6000 points,
  50.6s]{\label{fig:fence_IRVI_value6000}\includegraphics[width=0.5\columnwidth]{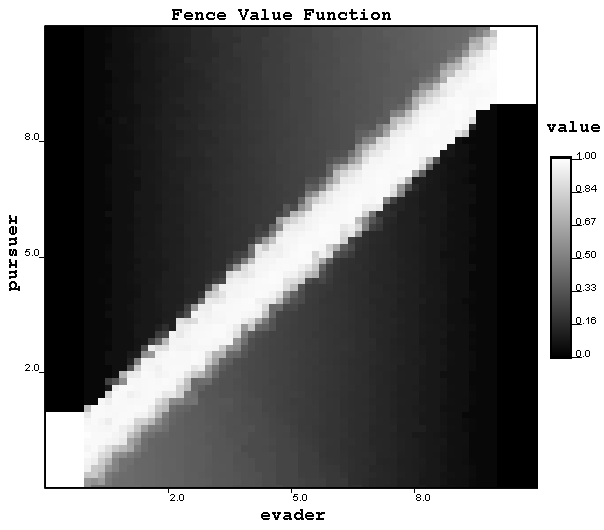}}
  ~
  \subfloat[analytic
  solution]{\label{fig:fence_analytic_value}\includegraphics[width=0.5\columnwidth]{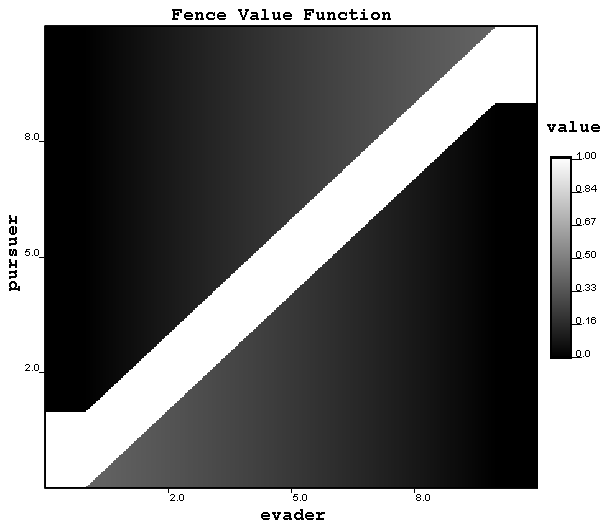}}
  \\

  \caption{Progression of the value function approximation computed by $\iGame$, for the fence escape game, indicating the number of samples, and the elapsed computation time. Progressive refinement of the value function and convergence to the analytic solution, shown in figure \ref{fig:fence_analytic_value}, is seen.}
  \label{fig:fence_IRVI_value}
\end{figure}

\subsection{Homicidal Chauffeur}\label{sec:simulation-HC}

Now we consider a variation of the classic homicidal chauffeur problem~\cite{TB-GO:99}, in which a fast but less agile pursuer seeks to bring a slower, more manoeuvrable evader within some capture distance in minimum time, while the evader tries to reach some greater escape distance. For the pursuer we use a simplistic model of vehicular dynamics, in which heading rate is commanded directly, while the evader is kinematically unconstrained except for a maximum velocity. This results in a game in $\real^5$ with dynamics:
\begin{align*}
& \dot{x}_p = v_p \cos{\theta}, ~~~~~~~ \dot{y}_p = v_p \sin{\theta}, ~~~~~~~ \dot{\theta} = u_p \\
& \dot{x}_e = v_e \cos{u_e}, ~~~~~~~ \dot{y}_e = v_e \sin{u_e},
\end{align*}
Where $(x_p, y_p, \theta)$ defines the state of the pursuer, who commands angular rate, $u_p ~,~
|u_p| < \omega$, and $(x_e,y_e)$ defines the state of the evader, who commands angle, $u_e$, directly. By expressing the dynamics in a coordinate system fixed to the pursuer, this problem may be reduced to 2 dimensions, in which the dynamics are:
\begin{align*}
& \dot{x} = u_p y + v_e \cos{u_e} - v_p \\
& \dot{y} = -u_p x - v_e \sin{u_e} \\
\end{align*}
Where $q = (x,y)$ represents the location of the evader in a coordinate system fixed to the pursuer - translated by $(x_p,y_p)$ and rotated by angle $\theta$. In this formulation of the game, the pursuer wins if $||q||_{\infty} < r_p$, which represents intersection of the evader with a square pursuer, and the evader wins if $|q| > r$. The parameters used in the simulation tests are:

\begin{table}[h]
\centering
\begin{tabular}{|l|l||l|l||l|l||l|l||l|l|}
\hline
$\omega$ & 5 & $v_e$ & 0.5 & $v_p$ & 1 & $r$ & 1 & $r_p$ & 0.05
\tabularnewline
\hline
\end{tabular}
\end{table}

Firstly, we examine the performance of synchronous $\iGame$ ($D = 0$) and $\iGameStar$ on the 2-dimensional problem. Figure~\ref{fig:HC-comparison-error} shows the temporal evolutions of the approximation errors generated by the multi-grid, $\iGame$ and $\iGameStar$ Algorithms. Each curve corresponds to the average of a number of trials for a specific algorithm. The trial number of $\iGameStar$ is $100$. Due to their relatively slow convergence, we only simulate the multi-grid method and $\iGame$ Algorithm for $5$ and $10$ times, respectively. The standard deviations of each algorithm is also shown in Figure~\ref{fig:HC-comparison-error}. In order to compute the approximation errors, we use the solution computed by a $200\times200$ grid as the benchmark. It is noted that logarithmic scaling is used in the figure.

Figure~\ref{fig:HC-comparison-error} clearly shows that the convergence of $\iGame$ is faster than that of the multi-grid method, and $\iGameStar$ is significantly faster than $\iGame$. In Figure~\ref{fig:HC-chart}, we compare the average computation times three algorithms take to reach the approximation errors of $0.1$, $0.08$, $0.06$ and $0.04$. Figure~\ref{fig:HC-chart} confirms the superior convergence rates of $\iGame$ and $\iGameStar$ over that of the multi-grid method. The evolution of the $\iGame$ value function approximation for the fence escape game is shown in Figure~\ref{fig:HC-iSaddle-star-incremental}.

\begin{figure}[h]
	\vspace{0cm}
	   \center{\includegraphics[width=\columnwidth]
	       {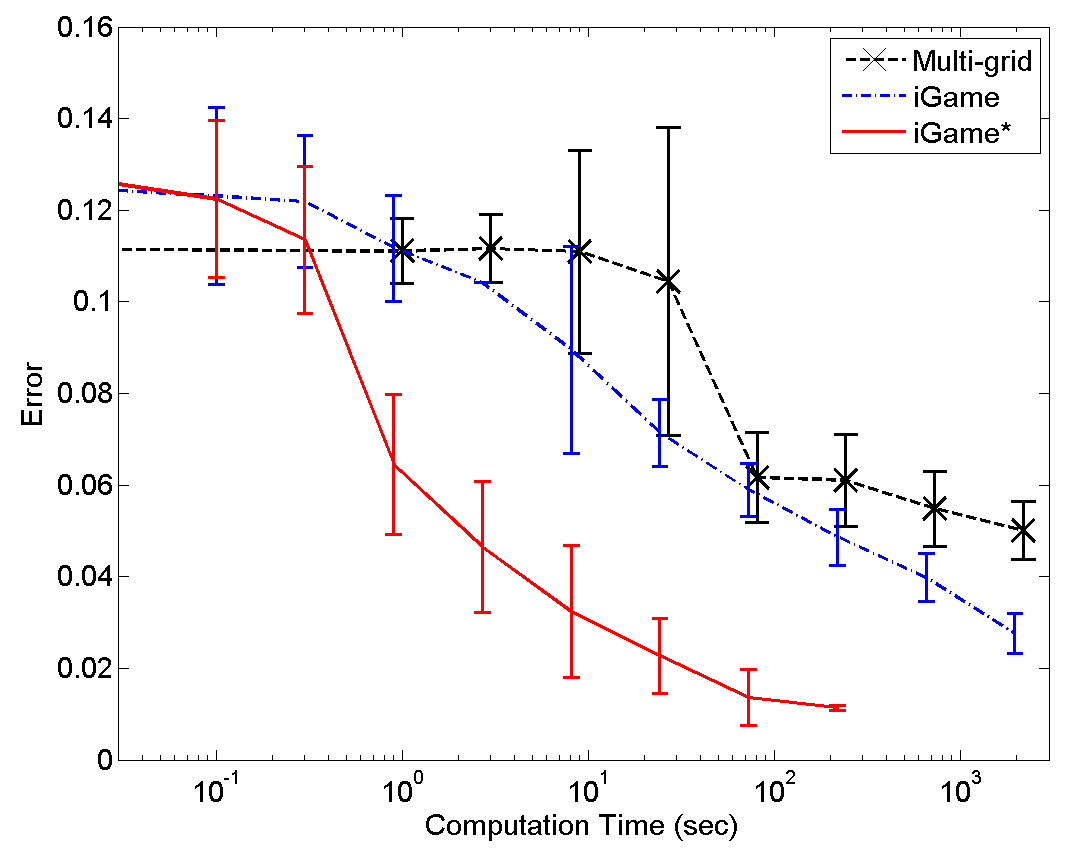}}
	  \vspace{0.0in}
	  \caption{The average errors and standard deviations of the multi-grid, $\iGame$ and $\iGameStar$ Algorithms}
	  \label{fig:HC-comparison-error}
\end{figure}

\begin{figure}[h]
  \centering
  \subfloat[]{\label{fig:HC_igame_value500}\includegraphics[width=0.5\columnwidth]{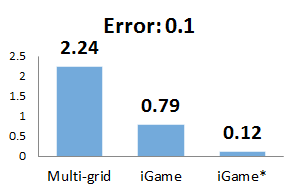}}
  ~
  \subfloat[]{\label{fig:HC_igame_value1000}\includegraphics[width=0.5\columnwidth]{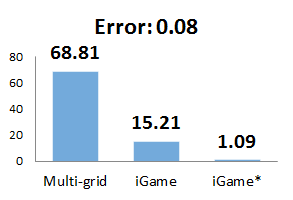}}
  \\
  \subfloat[]{\label{fig:HC_igame_value2000}\includegraphics[width=0.5\columnwidth]{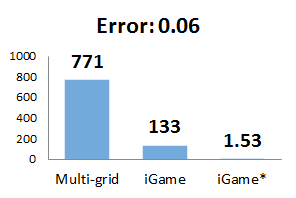}}
  ~
  \subfloat[]{\label{fig:HC_igame_value6500}\includegraphics[width=0.5\columnwidth]{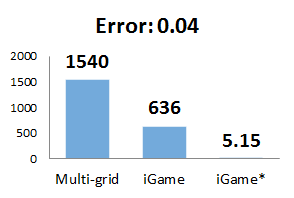}}
  \caption{The comparison of average elapsing times (seconds) to reach different approximation errors: multi-grid (left), $\iGame$ (middle), $\iGameStar$ (right)}
  \label{fig:HC-chart}
\end{figure}

\begin{figure}[h]
	\vspace{0cm}
	   \center{\includegraphics[width=\columnwidth]
	       {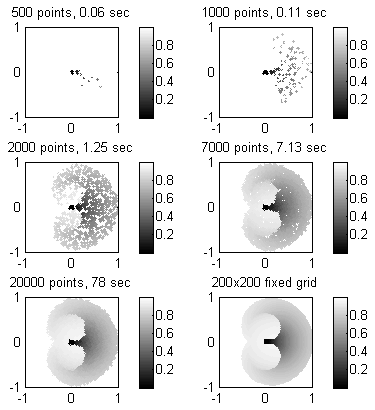}}
	  \vspace{0.0in}
	  \caption{Progression of the value function approximation computed by $\iGameStar$, for the homicidal chauffeur game, indicating the number of points in the set, and the elapsed computation time.}
	  \label{fig:HC-iSaddle-star-incremental}
\end{figure}

\begin{figure}[h]
	   \center{\includegraphics[width=\columnwidth]
	       {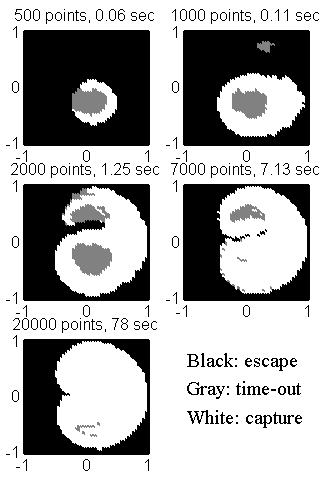}}
\caption{Progression of capture and escape regions for the homicidal chauffeur problem, with the evader using a 50x50 fixed grid approximation, and the pursuer using $\iGameStar$.} \label{fig:HC-realgame}
\end{figure}

Secondly, we examine the performance of $\iGameStar$ when the pursuer uses $\iGameStar$ and the evader uses a fixed-grid approximation based on a $50\times50$ grid iterated to convergence. Figure~\ref{fig:HC-realgame} shows
the outcome of the game from initial states on a $50\times50$ grid. For practical purposes, a maximum time approximately $10$ times larger than a reasonable capture time was used for the duration of the simulated games, with the outcome of games reaching this threshold referred to as ``timeout''.
Qualitatively, games so terminating generally exhibit oscillatory behaviour, with the pursuer attempting to capture the evader, who narrowly escapes, the pursuer turning around to try again, the evader escaping again, and so on. Thus, while some games labeled as timeout may in fact have terminated at some later time, it is thought that most will behave this way and so timeout can be thought of also in this sense as a stalemate.

These tests demonstrate the performance of $\iGameStar$ against a fixed grid opponent improving as the number of points increases, to the point where the evader is captured from nearly all states where capture is possible, and some states from which escape is possible. The latter case is not entirely obvious in the figures, mostly due to the fact that the 50x50 fixed grid still yields a fairly good approximation. Importantly, these results also show the benefit of using $\iGameStar$ in an online setting - early on, when few points are used, games starting from many states terminate in timeout, however, this outcome is not truly a loss for the pursuer, but allows more time to incrementally improve the policy. As the policy is improved, the pursuer is better able to capture the evader, and states which initially would have resulted in timeout result in capture. In other words, so long as the pursuer is able to prevent the evader from escaping, the policy can be improved, and capture is inevitable.

\subsection{Implementation Notes}


All simulation tests were written in Java version 1.7.0 and run on a machine running Windows 7, with 8GB RAM and an Intel(R) Core(TM) i7 processor running at 2.80 GHz.

For all simulation tests described in this document, the optimal feedback control policies were determined based on the value function approximation, for all the algorithms, by solving a single-step min-max problem over a discrete set of inputs, as in the value update.

\section{Conclusions}

We have investigated two anytime computation algorithms, $\iGame$ and $\iGameStar$, for the approach-evasion differential game. Their asymptotic convergence properties have been formally analyzed. Through a number of numerical simulations, we have demonstrated their anytime property and superior computational efficiency over the state of the art. Future directions include stochastic differential games and multi-player differential games.


\section{Appendix}

In this section, we will provide the complete proof for Lemma~\ref{lem3} and Theorem~\ref{the1}.

\subsection{Preliminary results}

Let us start with a set of generic preliminary results. We consider finite grids $X_d\subset \mathcal{X}$, $U_d\subset U$ and $W_d\subset W$ such that \begin{align}&\forall x\in \mathcal{X},\;\; \exists x'\in X_d,\;\; {\rm s.t.}\;\; \|x - x'\| \leq d,\nnum\\
&\forall u\in U,\;\; \exists u'\in U_d,\;\; {\rm s.t.}\;\; \|u - u'\| \leq d,\nnum\\
&\forall w\in W,\;\; \exists w'\in W_d,\;\; {\rm s.t.}\;\; \|w - w'\| \leq d.\label{e41}\end{align} Note that $X_d$, $U_d$ and $W_d$ are not necessarily regular lattices. The following lemma show the state discretization is non-expansive.

\begin{lemma}[Non-expansiveness of state discretization] Consider any two functions $v,\bar{v} : X_d \rightarrow [0,1]$.
\begin{enumerate}
\item[(T1)] The following holds for any finite set $X_{d'}\subseteq X_d$ and any scalar $d''\geq d'$: \begin{align}&\|\min_{y\in \mathcal{B}(x,d'')\cap X_{d'}}v(y)-\min_{z\in \mathcal{B}(x,d'')\cap X_{d'}}\bar{v}(z)\|_{\mathcal{X}}\nnum\\
    &\leq\|v-\bar{v}\|_{X_d},\label{e43}\\
    &\|\max_{y\in \mathcal{B}(x,d'')\cap X_{d'}}v(y)-\max_{z\in \mathcal{B}(x,d'')\cap X_{d'}}\bar{v}(z)\|_{\mathcal{X}}\nnum\\
    &\leq\|v-\bar{v}\|_{X_d}.\label{e46}\end{align}
\item[(T2)] The following holds for any $\alpha \geq d \geq d'$:
 \begin{align}&\|\min_{y\in \mathcal{B}(x,\alpha)\cap X_d}v(y)-\min_{z\in \mathcal{B}(x,\alpha)\cap X_{d'}}\bar{v}(z)\|_{\mathcal{X}}\nnum\\
    &\leq \|\min_{y\in \mathcal{B}(x,d)\cap X_d}v(y)-\min_{z\in \mathcal{B}(x,d')\cap X_{d'}}\bar{v}(z)\|_{\mathcal{X}}.\label{e48}\end{align}
\end{enumerate}\label{lem1}\end{lemma}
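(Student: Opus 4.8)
The plan is to derive both parts from two ingredients: the elementary fact that minimization over a \emph{fixed} index set is nonexpansive in the sup norm, and a ``fattening'' identity converting a minimization over a large ball into an iterated one --- over a small ball first, then a dilation. Concretely, if $Y\subseteq\mathcal{X}$ is a finite $\delta$-cover of $\mathcal{X}$, $w:Y\to[0,1]$ and $r\ge\delta$, then
\begin{align*}
\min_{z\in\mathcal{B}(x,r)\cap Y}w(z)\;=\;\min_{x'\in\mathcal{B}(x,r-\delta)}\ \min_{z\in\mathcal{B}(x',\delta)\cap Y}w(z).
\end{align*}
The inequality ``$\le$'' holds since $\mathcal{B}(x',\delta)\subseteq\mathcal{B}(x,r)$ whenever $\|x-x'\|\le r-\delta$; for ``$\ge$'', pick a minimizer $z^*\in\mathcal{B}(x,r)\cap Y$ and let $x'$ be reached by moving from $x$ toward $z^*$ until $\|x'-z^*\|=\min\{\|x-z^*\|,\delta\}$, so that $\|x-x'\|\le r-\delta$ and $z^*\in\mathcal{B}(x',\delta)\cap Y$.

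Part (T1) is then immediate. Fix $x\in\mathcal{X}$ and set $A:=\mathcal{B}(x,d'')\cap X_{d'}$; it is nonempty because $X_{d'}$ is a $d'$-cover and $d''\ge d'$, and $A\subseteq X_{d'}\subseteq X_d$. Letting $y^*,z^*$ attain $\min_{y\in A}v(y)$ and $\min_{z\in A}\bar v(z)$, we get $\min_A v-\min_A\bar v=v(y^*)-\bar v(z^*)\le v(z^*)-\bar v(z^*)\le\|v-\bar v\|_{X_d}$; the reverse inequality follows by symmetry, and taking $\sup_x$ gives \eqref{e43}. Inequality \eqref{e46} is the same computation with $\max$ (or apply \eqref{e43} to $1-v,\,1-\bar v\in[0,1]$, using $\max_A v=1-\min_A(1-v)$ and $\|(1-v)-(1-\bar v)\|_{X_d}=\|v-\bar v\|_{X_d}$).

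For part (T2), apply the fattening identity to both minimizations on the left of \eqref{e48}: to the $X_d$-term with $\delta=d$, and to the $X_{d'}$-term also with $\delta=d$ (legitimate since a $d'$-cover with $d'\le d$ is also a $d$-cover). This rewrites the left-hand side of \eqref{e48} as $\|\min_{x'\in\mathcal{B}(x,\alpha-d)}g(x')-\min_{x'\in\mathcal{B}(x,\alpha-d)}\bar h(x')\|_{\mathcal{X}}$ with $g(x'):=\min_{y\in\mathcal{B}(x',d)\cap X_d}v(y)$ and $\bar h(x'):=\min_{z\in\mathcal{B}(x',d)\cap X_{d'}}\bar v(z)$. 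As these are minimizations over the \emph{same} ball, the estimate from part (T1) (now over a continuum index set) bounds the left-hand side of \eqref{e48} by $\|g-\bar h\|_{\mathcal{X}}$. What remains is to compare $\|g-\bar h\|_{\mathcal{X}}$ with the right-hand side $\|g-g'\|_{\mathcal{X}}$, where $g'(x'):=\min_{z\in\mathcal{B}(x',d')\cap X_{d'}}\bar v(z)$; applying the fattening identity once more exhibits $\bar h$ as the $(d-d')$-dilation of $g'$, so $\bar h\le g'$ pointwise, and the proof is completed by combining this with $X_{d'}\subseteq X_d$ and the monotonicity ``larger ball $\Rightarrow$ smaller minimum''.

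I expect the fattening identity and part (T1) to be routine, and the final comparison in (T2) to be the obstacle: the clean reduction yields a bound in terms of $\bar h$, the resolution-$d$ interpolation of $\bar v$, while \eqref{e48} is phrased via the finer resolution-$d'$ interpolation $g'$; the pointwise inequality $\bar h\le g'$ controls only one side of the sup norm, and closing the remaining gap --- tracking the nested dilations at scales $d$ and $d'$ and using $X_{d'}\subseteq X_d$, $\alpha\ge d\ge d'$ --- is where the real work lies.
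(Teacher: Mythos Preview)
Your argument for (T1) is correct and essentially identical to the paper's: both pick the minimizers $y',z'$ over the common index set $\mathcal{B}(x,d'')\cap X_{d'}$, use $v(y')\le v(z')$ and $\bar v(z')\le\bar v(y')$, and conclude via the standard max-of-two-pointwise-errors bound. Your remark that \eqref{e46} follows from \eqref{e43} applied to $1-v,\,1-\bar v$ is a nice shortcut the paper does not mention.

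For (T2) your strategy is exactly the paper's: reduce the large-ball minimum to a small-ball minimum via a fattening identity, then invoke the (T1) mechanism. The paper writes this tersely --- it picks $y',z'$ with $\mathcal{B}(y',d),\mathcal{B}(z',d')\subseteq\mathcal{B}(x,\alpha)$ realizing the two minima and says ``the remainder can be finished via following the same lines in (T1).'' Your version is more explicit: you fatten \emph{both} terms at the common scale $\delta=d$ so that the outer minimizations are over the \emph{same} ball $\mathcal{B}(x,\alpha-d)$, which makes the (T1) step clean. That is a genuine improvement in rigor over the paper's sketch.

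The gap you flag is real, and it is worth noting that the paper's sketch does not close it either. Following (T1) literally on the paper's $y',z'$ requires both $g'(z')\le g'(y')$ and $g(y')\le g(z')$. The first holds because $\mathcal{B}(y',d')\subseteq\mathcal{B}(y',d)\subseteq\mathcal{B}(x,\alpha)$; but the second would need $\mathcal{B}(z',d)\subseteq\mathcal{B}(x,\alpha)$, whereas one only knows $\mathcal{B}(z',d')\subseteq\mathcal{B}(x,\alpha)$ and $d\ge d'$. So the asymmetry between the two resolutions that you isolate --- your $\bar h\le g'$ controls only one side --- is precisely the same obstruction hidden behind the paper's ``same lines as (T1).'' In short: your approach coincides with the paper's, your (T1) is complete, and for (T2) you have made the paper's terse sketch precise enough to see where additional argument is needed.
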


\begin{proof} (T1) Pick any $x\in \mathcal{X}$, and let $\min_{y\in \mathcal{B}(x,d'')\cap X_{d'}}v(y) = v(y')$ and $\min_{z\in \mathcal{B}(x,d'')\cap X_{d'}}\bar{v}(z) = \bar{v}(z')$ for some $y',z'\in X_{d'}$. Since $y',z'\in \mathcal{B}(x,d'')\cap X_d$, we then have \begin{align}v(y') \leq v(z'),\quad \bar{v}(z') \leq \bar{v}(y').\label{e25}\end{align}

We now move to show $\|v(y') - \bar{v}(z')\| \leq \max\{\|v(z') - \bar{v}(z')\|,\|\bar{v}(y') - v(y')\|\}$. Towards this end, we have \begin{align*}&v(y') - \bar{v}(z') \leq v(z') - \bar{v}(z')\nnum\\
&\leq \max\{\|v(z') - \bar{v}(z')\|,\|\bar{v}(y') - v(y')\|\},\end{align*} where the first inequality is a direct result of~\eqref{e25}. Analogously, we have \begin{align*}&\bar{v}(z') - v(y') \leq \bar{v}(y') - v(y')\nnum\\
&\leq \max\{\|v(z') - \bar{v}(z')\|,\|\bar{v}(y') - v(y')\|\}.\end{align*}

Combine the above two cases, and then we have \begin{align}&\|\min_{y\in \mathcal{B}(x,d')\cap X_{d'}}v(y)-\min_{z\in \mathcal{B}(x,d')\cap X_{d'}}\bar{v}(z)\| = \|v(y') - \bar{v}(z')\|\nnum\\
&\leq \max\{\|v(z') - \bar{v}(z')\|,\|\bar{v}(y') - v(y')\|\}\leq\|v-\bar{v}\|_{X_d},\label{e16}\end{align} where the last inequality uses the fact that $y',z'\in X_{d'}\subseteq X_d$. Since~\eqref{e16} holds for any $x\in \mathcal{X}$, then we reach the desired result of~\eqref{e43}. One can show~\eqref{e46} through analogous arguments.

(T2) Note that $\alpha \geq d \geq d'$. Then there are $y',z'\in \mathbb{B}(x,\alpha)$ such that $\mathbb{B}(y',d),\mathbb{B}(z',d')\subseteq \mathbb{B}(x,\alpha)$ and \begin{align*}&\min_{y\in \mathcal{B}(x,\alpha)\cap X_d}v(y) = \min_{y\in \mathcal{B}(x',d)\cap X_d}v(y),\\&\min_{z\in \mathcal{B}(x,\alpha)\cap X_{d'}}\bar{v}(z) = \min_{z\in \mathcal{B}(z',d')\cap X_{d'}}\bar{v}(z).\end{align*} The remainder of the proof can be finished via following the same lines in (T1), and thus omitted. \end{proof}

For time discretization, we choose the size $h > 0$ where $h > d$. After time and state discretization, we then obtain a discrete-time game played on $X_d$. We now proceed to find the value function of this fully-discrete game. This will be achieved by applying the Dynamic Programming Principle. Towards this end, we define the discrete function $v : X_d \rightarrow [0,1]$, and the discrete operator $\FF_{\rho} : X_d \rightarrow [0,1]$ as follows: for $x\in X_d\setminus \mathcal{B}(\mathcal{X}_{\rm goal},Mh+d)$, \begin{align}\FF_{\rho}\circ v(x) &=
e^{-(h-d)}\displaystyle{\max_{w\in W_d}\min_{u\in U_d, y\in \mathcal{B}(x+hf(x,u,w),\alpha)\cap X_d}}v(y)\nnum\\&+ 1 - e^{-(h-d)},\nnum\end{align} and, for $x\in X_d\cap \mathcal{B}(\mathcal{X}_{\rm goal},Mh+d)$, $\FF_{\rho}\circ v(x) = v(x)$.
Since $\alpha > d$, $\mathcal{B}(x+hf(x,u,w),\alpha)\cap X_d \neq \emptyset$ for any $(u,w)\in U\times W$, and thus the operator $\FF_{\rho}$ is well-defined. Let $\FF^n_{\rho}$ be the composition of $n$ of $\FF_{\rho}$; i.e., $\FF^n_{\rho} \triangleq \FF_{\rho}\circ\cdots\circ\FF_{\rho}$.

\begin{remark} Define the discrete operator $\GG_{\rho} : X_d \rightarrow [0,1]$: for $x\in X_d\setminus \mathcal{B}(\mathcal{X}_{\rm goal},Mh+d)$, \begin{align}\mathcal{G}_{\rho}\circ T(x) &=
h - d\nnum\\
&+ \displaystyle{\max_{w\in W_d}\min_{u\in U_d} \min_{y\in \mathcal{B}(x+hf(x,u,w),\alpha)\cap X_d}}T(y),\nnum\end{align} and, for $x\in X_d\cap \mathcal{B}(\mathcal{X}_{\rm goal},Mh+d)$, $\mathcal{G}_{\rho}\circ v(x) = v(x)$. The operator $\GG_{\rho}$ serves as the core of the numerical schemes on Page 232 in~\cite{PC-MQ-PSP:99}. It is not difficult to verify that $\FF_{\rho} = \Psi\circ\GG_{\rho}\circ\Psi^{-1}$.\oprocend\label{rem1}
\end{remark}

The following theorem summarizes the properties of $\FF_{\rho}$ and $v^*$. It is well-known that the contraction property holds for discounted optimal control; e.g., in~\cite{DPB:95}.

\begin{theorem}[Properties of $\FF_{\rho}$ and $v^*$] Suppose that Assumption~\ref{asm1} holds. The value function $v^*$ is lower semi-continuous. And the discrete operator $\FF_{\rho}$ is a contraction mapping with the constant $e^{-(h-d)}\in(0,1)$ in the following way: consider $X_{d'}\subseteq X_d$ with $d'\leq d$ and any $v\;:\;X_d \rightarrow [0,1]$, the following holds:
\begin{align*}&\|\min_{z\in \mathcal{B}(x,d')\cap X_{d'}}\FF_{\rho}\circ v(z) - \min_{z\in \mathcal{B}(x,d')\cap X_{d'}}\FF_{\rho}\circ v^*(z)\|_{\mathcal{X}}\nnum\\
&\leq\|\FF_{\rho}\circ v-\FF_{\rho}\circ v^*\|_{X_d} \leq e^{-(h-d)}\nnum\\
&\times\|\min_{z\in \mathcal{B}(x,\alpha)\cap X_d}v(z)-\min_{y\in \mathcal{B}(x,\alpha)\cap X_d}v^*(y)\|_{\mathcal{X}}\nnum\\
&\leq e^{-(h-d)}\|v- v^*\|_{X_d}.\end{align*}\label{the2}
\end{theorem}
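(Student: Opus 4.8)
The plan is to prove the two assertions separately. Lower semicontinuity of $v^*$ is a statement about the continuous game and will be deduced from the viability characterization recalled in Section~\ref{sec:preliminary-viability}. The contraction chain, by contrast, is a purely discrete estimate: its outer two inequalities are direct instances of Lemma~\ref{lem1}, while the middle one rests on the elementary non-expansiveness of $\min$ and $\max$ together with the Kru\v{z}kov discount factor $e^{-(h-d)}$.

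For lower semicontinuity, I would write $v^*=\Psi\circ T^*$ with $\Psi(r)=1-e^{-r}$ continuous and nondecreasing (and $\Psi(+\infty)=1$); since post-composition with a continuous nondecreasing map preserves lower semicontinuity, it suffices that $T^*$ be lower semicontinuous, i.e. that $\operatorname{epi}T^*=\{(x,\varpi)\in\mathcal X\times\real:T^*(x)\le\varpi\}$ be closed. By the combination of Theorem~5.2 and Lemma~5.4 of~\cite{PC-MQ-PSP:99} quoted above, $\operatorname{epi}T^*$ coincides with the viability kernel of $\Phi$; under Assumption~\ref{asm1} the map $\Phi$ has nonempty compact convex values (using (A3) and the explicit convexification of $\Phi$ at the goal), linear growth on the compact set $\mathcal X\times\real$, and closed graph (from (A1)--(A2) and compactness), hence $\Phi$ is Marchaud, and the viability kernel of a Marchaud map relative to a closed set is closed. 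A function with closed epigraph is lower semicontinuous, which gives the claim.

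For the contraction chain, the first inequality is~\eqref{e43} of Lemma~\ref{lem1}(T1) applied to the pair $(\FF_\rho\circ v,\FF_\rho\circ v^*)$ with set $X_{d'}\subseteq X_d$ and radius $d''=d'$, and the last inequality is~\eqref{e43} again, now with $X_{d'}=X_d$ and $d''=\alpha\ge d$. For the middle inequality I would estimate $|\FF_\rho\circ v(x)-\FF_\rho\circ v^*(x)|$ pointwise over $x\in X_d$. For $x\notin\mathcal B(\mathcal X_{\rm goal},Mh+d)$ the additive constants $1-e^{-(h-d)}$ cancel and
\[
\FF_\rho\circ v(x)-\FF_\rho\circ v^*(x)=e^{-(h-d)}\big(A_x(v)-A_x(v^*)\big),
\]
where $A_x(g):=\max_{w\in W_d}\min_{u\in U_d}\min_{y\in\mathcal B(x+hf(x,u,w),\alpha)\cap X_d}g(y)$. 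Applying repeatedly that $g\mapsto\max_w g$ and $g\mapsto\min_u g$ are non-expansive in the sup-norm, $|A_x(v)-A_x(v^*)|$ is at most $\max_{u\in U_d,\,w\in W_d}\big|\min_{y\in\mathcal B(x+hf(x,u,w),\alpha)\cap X_d}v(y)-\min_{y\in\mathcal B(x+hf(x,u,w),\alpha)\cap X_d}v^*(y)\big|$, and for each fixed $(u,w)$ the point $x+hf(x,u,w)$ lies in $\mathcal X$, so this is bounded by $\|\min_{z\in\mathcal B(\cdot,\alpha)\cap X_d}v(z)-\min_{y\in\mathcal B(\cdot,\alpha)\cap X_d}v^*(y)\|_{\mathcal X}$. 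On $\mathcal B(\mathcal X_{\rm goal},Mh+d)$ the operator $\FF_\rho$ is the identity, so there $|\FF_\rho\circ v(x)-\FF_\rho\circ v^*(x)|=|v(x)-v^*(x)|$, which does not enlarge the stated bound in the regime in which the theorem is used (the functions being compared are prescribed to agree on that neighbourhood). Taking the supremum over $x\in X_d$ then yields $\|\FF_\rho\circ v-\FF_\rho\circ v^*\|_{X_d}\le e^{-(h-d)}\|\min_{z\in\mathcal B(\cdot,\alpha)\cap X_d}v(z)-\min_{y\in\mathcal B(\cdot,\alpha)\cap X_d}v^*(y)\|_{\mathcal X}$.

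The step I expect to be the main obstacle is the lower-semicontinuity claim: the discrete chain is essentially bookkeeping built on Lemma~\ref{lem1}, whereas the lower semicontinuity of $v^*$ requires verifying that $\Phi$ meets the Marchaud hypotheses (and is Lipschitz off the goal) under Assumption~\ref{asm1}, so that its viability kernel --- equivalently $\operatorname{epi}T^*$ --- is closed; the convexification in the definition of $\Phi$ near $\mathcal X_{\rm goal}$ and the compactness of $\mathcal X$, $U$, $W$ are what make the graph closed. A secondary care point is reconciling the behaviour of $\FF_\rho$ as the identity on $\mathcal B(\mathcal X_{\rm goal},Mh+d)$ with the global contraction statement.
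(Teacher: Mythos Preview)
Your proposal is correct and follows essentially the same route as the paper: lower semicontinuity via $v^*=\Psi\circ T^*$ together with the viability characterization from~\cite{PC-MQ-PSP:99}, and the contraction chain with the two outer inequalities coming from Lemma~\ref{lem1}(T1) and the middle one from the structure of $\FF_\rho$. The only cosmetic difference is that the paper handles the middle inequality by selecting explicit maximizers/minimizers $(u(x),w(x))$ and $(\bar u(x),\bar w(x))$ and replaying the argument of Lemma~\ref{lem1}(T1), whereas you invoke non-expansiveness of $\max_w$ and $\min_u$ directly; your observation about the goal neighbourhood (where $\FF_\rho$ is the identity and one needs $v$ and the fixed point to agree there) is in fact more careful than the paper's own step~\eqref{e36}, which tacitly assumes this agreement.
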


\begin{proof} Under Assumption~\ref{asm1}, The function $T^*$ is lower semi-continuous by Theorem 5.2 and Corollary 5.6 in~\cite{PC-MQ-PSP:99} with $\mathcal{X}_{\rm free} = K$ and $\mathcal{X}_{\rm target} = C$. Recall $v^* = \Psi(T^*)$, and $\Psi$ is continuous and bijective. This in conjunction with the lower semi-continuity of $T^*$ implies that of $v^*$.

Now we proceed to verify the contraction property of $\FF_{\rho}$. Choose any $x\in X_d\setminus \mathcal{B}(\mathcal{X}_{\rm goal},Mh+d)$. Let $(u(x),w(x))$ (resp. $(\bar{u}(x),\bar{w}(x))$) be a solution to attain the minimum and the maximum in $\FF_{\rho}\circ v(x)$ (resp. $\FF_{\rho}\circ v_{h,d}^*(x)$) respectively. Let $Y(x) \triangleq x + hf(x,u(x),w(x))$ and $\bar{Y}(x) \triangleq x + hf(x,\bar{u}(x),\bar{w}(x))$. Then we have \begin{align}&\min_{y\in\mathcal{B}(Y(x),\alpha)\cap X_d}v(y)\leq \min_{y\in\mathcal{B}(\bar{Y}(x),\alpha)\cap X_d}v(y),\nnum\\
&\min_{z\in\mathcal{B}(\bar{Y}(x),\alpha)\cap X_d}v^*(z)\leq \min_{z\in\mathcal{B}(Y(x),\alpha)\cap X_d}v^*(z).\label{e51}\end{align}

By following the same lines in (T1) of Lemma~\ref{lem1}, one can show that \begin{align}&\|\min_{y\in\mathcal{B}(Y(x),\alpha)\cap X_d}v(y) - \min_{z\in\mathcal{B}(\bar{Y}(x),\alpha)\cap X_d}v^*(z)\|\nnum\\
&\leq \max\{\|\min_{y\in\mathcal{B}(Y(x),\alpha)\cap X_d}v(y) - \min_{z\in\mathcal{B}(Y(x),\alpha)\cap X_d}v^*(z)\|,\nnum\\
&\|\min_{y\in\mathcal{B}(\bar{Y}(x),\alpha)\cap X_d}v(y) - \min_{z\in\mathcal{B}(\bar{Y}(x),\alpha)\cap X_d}v^*(z)\|\}.\label{e52}\end{align} The relation~\eqref{e52} implies the following: \begin{align}&\|\FF_{\rho}\circ v(x) - \FF_{\rho}\circ v^*(x)\|_{X_d} \leq e^{-(h-d)}\nnum\\
&\times\|\min_{z\in \mathcal{B}(x,\alpha)\cap X_d}v(z)-\min_{y\in \mathcal{B}(x,\alpha)\cap X_d}v^*(y)\|_{\mathcal{X}}.\label{e26}\end{align}

On the other hand, one can see the following: \begin{align}\|\FF_{\rho}\circ v-\FF_{\rho}\circ v_{h,d}^*\|_{X_d\cap\mathcal{B}(\mathcal{X}_{\rm goal},Mh+d)} =0.\label{e36}\end{align}

Combine~\eqref{e26} and~\eqref{e36}, and it renders the following: \begin{align}&\|\min_{z\in \mathcal{B}(x,d')\cap X_{d'}}\FF_{\rho}\circ v(z) - \min_{z\in \mathcal{B}(x,d')\cap X_{d'}}\FF_{\rho}\circ v^*(z)\|_{\mathcal{X}}\nnum\\
&\leq\|\FF_{\rho}\circ v-\FF_{\rho}\circ v_{h,d}^*\|_{X_d} \leq e^{-(h-d)}\nnum\\
&\times\|\min_{z\in \mathcal{B}(x,\alpha)\cap X_d}v(z)-\min_{y\in \mathcal{B}(x,\alpha)\cap X_d}v_{h,d}^*(y)\|_{\mathcal{X}}\nnum\\
&\leq e^{-(h-d)}\|v- v_{h,d}^*\|_{X_d}.\label{e37}\end{align} where the first and third inequalities are  direct results of~\eqref{e43} in Lemma~\ref{lem1}. We establish the result by noting the fixed point property of $\FF_{\rho}\circ v_{h,d}^*(x) = v_{h,d}^*(x)$.
\end{proof}

It is noticed that $\FF_{\rho}$ being a contraction mapping will play an important role in our subsequent analysis. Since $T^*(x)$ is potentially infinite, then $\GG_{\rho}$ in~\cite{PC-MQ-PSP:99} may not be a contraction mapping.

\subsection{Preliminary results for Lemma~\ref{lem3} and Theorem~\ref{the1}}

We turn our attention to the $\iGame$ Algorithm in this section. Before showing Theorem~\ref{the1}, we provide a set of instrumental results.

\subsubsection{Sample densities}

The following lemma characterizes the decreasing rate of $d_n$ under the uniform sampling through a pair of lower and upper bounds for $d_n$.

\begin{lemma} The following properties hold with probability one:
\begin{enumerate}
\item[(S1)] There is $N_s \geq 1$ such that $d_n \leq D_s(\frac{\log n}{n})^{\frac{1}{N}}$ for all $n\geq N_s$ with probability one.
\item[(S2)] There is a pair of $n_s \geq 1$ and $d_s > 0$ such that $d_n \geq d_s(\frac{1}{n})^{\frac{1}{N}}$ surely for all $n\geq n_s$.
\end{enumerate}\label{lem2}\end{lemma}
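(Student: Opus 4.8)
The plan is to analyze the random packing/covering induced by i.i.d.\ uniform samples $y_1,\dots,y_n$ on $\mathcal{X}$ and to control the dispersion $d_n$ by sandwiching it between a volumetric lower bound and a covering-type upper bound, each holding with probability one for $n$ large. For (S1), I would fix a deterministic net: since $\mathcal{X}_{\rm free}$ is compact, for each scale $\rho>0$ choose a finite family of balls of radius $\rho/2$ whose centers form a $\rho/2$-net of $\mathcal{X}_{\rm free}$; the number of such balls is $\Theta(\rho^{-N})$ by compactness and the definition of Lebesgue measure. If every one of these small balls contains at least one sample, then for any $x\in\mathcal{X}_{\rm free}$ the nearest net center $c$ has a sample within $\rho/2$ of $c$, hence a sample within $\rho$ of $x$, so $d_n\le\rho$. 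Taking $\rho=\rho_n\triangleq D_s(\tfrac{\log n}{n})^{1/N}$ (with $D_s$ as fixed by the definition $\gamma=\tfrac{C_ND_s^N}{\mu(\mathcal{X}_{\rm free})}>2$), each small ball has probability mass at least $c_0\rho_n^N=c_0 D_s^N\tfrac{\log n}{n}$ for a constant $c_0>0$ depending only on $\mathcal{X}_{\rm free}$ and $N$; so the probability that a fixed small ball is missed by all $n$ samples is at most $(1-c_0 D_s^N\tfrac{\log n}{n})^n\le n^{-c_0 D_s^N}$. Union bounding over the $\Theta(\rho_n^{-N})=\Theta(\tfrac{n}{\log n})$ balls gives a failure probability $O(n^{1-c_0 D_s^N}/\log n)$, which is summable once $D_s$ is large enough (this is exactly where the condition $\gamma>2$, i.e.\ $D_s$ bounded below, is used), and Borel--Cantelli yields the existence of a random $N_s$ beyond which $d_n\le\rho_n$ almost surely.

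For (S2), I would use a volume/pigeonhole argument in the reverse direction. With $n$ sample points, the closed balls $\mathcal{B}(y_i,d_n/2)$ are not necessarily disjoint, but a standard trick is to take a maximal $d_n/2$-separated subset of $\{y_1,\dots,y_n\}$; by definition of dispersion every sample (indeed every point of $\mathcal{X}_{\rm free}$) is within $d_n$ of this subset, so the balls of radius $d_n$ about the separated subset cover $\mathcal{X}_{\rm free}$, while the balls of radius $d_n/2$ about it are disjoint and contained in a fixed bounded enlargement of $\mathcal{X}$. Counting volumes, the number of points in the separated subset is $O(d_n^{-N})$ from the packing side and $\Omega(d_n^{-N})$ is not what we need; rather, since it is a subset of the $n$ samples, its size is at most $n$, giving $c_1 d_n^{-N}\le n$, i.e.\ $d_n\ge d_s n^{-1/N}$ for a deterministic $d_s>0$ and all $n\ge n_s$. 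This bound is purely geometric and requires no probabilistic argument — it holds surely — which matches the wording ``surely'' in (S2); $n_s$ is only needed so that the enlargement constants are valid and $d_n$ is small enough for the packing to fit.

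The main obstacle is the constant-chasing in (S1): one must verify that the exponent $c_0 D_s^N$ exceeds the threshold needed for summability of the union bound, and that this is guaranteed by the paper's standing choice $\gamma=\tfrac{C_N D_s^N}{\mu(\mathcal{X}_{\rm free})}>2$ together with the lower bound on the local volume of balls intersected with $\mathcal{X}_{\rm free}$ near the boundary (which is where $c_0$ could degrade). I would handle the boundary issue by noting that $\mathcal{X}_{\rm free}$ being closed with positive measure lets us restrict the net to an inner region carrying all but a negligible fraction of the mass, or by invoking a mild regularity of $\partial\mathcal{X}_{\rm free}$ so that $\mu(\mathcal{B}(x,\rho)\cap\mathcal{X}_{\rm free})\ge c_0\rho^N$ uniformly; either way the clean bookkeeping is the crux, while the Borel--Cantelli structure and the volumetric lower bound are routine.
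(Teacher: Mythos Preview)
Your proposal is correct in outline and, for (S1), is in fact more careful than the paper's own argument. The paper does not introduce a net at all: it fixes a single ``worst'' point $z_n\in\mathcal{X}_{\rm free}$ whose $\rho_n$-ball misses all samples, computes the single-ball miss probability $(1-\tfrac{C_N\rho_n^N}{\mu(\mathcal{X}_{\rm free})})^{n-1}\le n^{-\gamma+\gamma/n}$, and applies Borel--Cantelli directly using $\gamma>2$. Your net-plus-union-bound is the standard way to make that step rigorous, and it explains transparently why an exponent strictly larger than $2$ is needed (the union bound over $\Theta(n/\log n)$ balls contributes an extra factor of $n$). Be aware that with balls of radius $\rho_n/2$ the miss exponent drops by a factor $2^N$, so $\gamma>2$ alone would not suffice as written; the fix is to take an $\varepsilon\rho_n$-net with balls of radius $(1-\varepsilon)\rho_n$ and send $\varepsilon\downarrow 0$, which recovers summability under $\gamma>2$. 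This is exactly the ``constant-chasing'' you flag, and it is the only place real care is needed.

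For (S2) your argument works but is more elaborate than the paper's. The paper uses a one-line covering bound: by the very definition of $d_n$, the balls $\mathcal{B}(y_i,d_n)$, $i=1,\dots,n$, cover $\mathcal{X}_{\rm free}$, hence $nC_Nd_n^N\ge \mu(\mathcal{X}_{\rm free})$ and $d_n\ge(\mu(\mathcal{X}_{\rm free})/(nC_N))^{1/N}$ for all $n$. No maximal separated subset is needed; the covering by the full sample set already gives the volumetric inequality. Your detour through a packing yields the same bound with a worse constant and more bookkeeping, so you may wish to streamline it.
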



\begin{proof} Define the event $E_n$ that $d_n > D_s(\frac{\log n}{n})^{\frac{1}{N}}$; i.e., there is $z_n\in \mathcal{X}_{\rm free}$ such that $\mathcal{B}(z_n,D_s(\frac{\log n}{n})^{\frac{1}{N}})\cap S_n = \emptyset$. Note that the Lebesgue measure of the ball $\mathcal{B}(z_n,D_s(\frac{\log n}{n})^{\frac{1}{N}})$ is $\mu(\mathcal{B}(z_n,D_s(\frac{\log n}{n})^{\frac{1}{N}})) = C_N \big(D_s(\frac{\log n}{n})^{\frac{1}{N}}\big)^N$. With this, we can estimate $\mathbb{P}(E_n)$ as follows: \begin{align}&\mathbb{P}(E_n)
= \big(1-\frac{\mu(\mathcal{B}(z_n,D_s(\frac{\log n}{n})^{\frac{1}{N}}))}{\mu(\mathcal{X}_{\rm free})}\big)^{n-1}\nnum\\
&= (1-\frac{C_N D_s^N}{\mu(\mathcal{X}_{\rm free})}\frac{\log n}{n})^{n-1}\leq e^{-\gamma\frac{(n-1)\log n}{n}}\leq n^{-\gamma+\frac{\gamma}{n}}.\nnum\end{align} Since $\gamma > 2$, then $\sum_{n=1}^{+\infty}\mathbb{P}(E_n) < +\infty$. By the Borel-Cantelli lemma in~\cite{SR:98}, we have $\mathbb{P}(\limsup_{n\rightarrow+\infty}E_n)=0$. It establishes (S1).

We now move to show (S2) by contradiction. Assume that for any pair of $n_s \geq 1$ and $d_s > 0$, there is $K \geq n_s$ such that $d_K < d_s(\frac{1}{K})^{\frac{1}{N}}$. By the definition of $d_K$, we know that $\cup_{\ell=1}^K\mathcal{B}(x_{\ell},d_K)\supseteq \mathcal{X}$. It is noticed that \begin{align*}&\mu(\cup_{\ell=1}^K\mathcal{B}(x_{\ell},d_K)\leq \sum_{\ell=1}^K\mu(\mathcal{B}(x_{\ell},d_s(\frac{1}{K})^{\frac{1}{N}}))\\
&= KC_N(d_s(\frac{1}{K})^{\frac{1}{N}})^N = C_N d_s^N.\end{align*} The above holds for any $d_s > 0$. Choose $d_s$ such that $C_N d_s^N < \mu(\mathcal{X})$. It contradicts that $\cup_{\ell=1}^K\mathcal{B}(x_{\ell},d_K)\supseteq \mathcal{X}$. We then reach the property of (S2).
\end{proof}

With the aid of Lemma~\ref{lem2}, the following lemma characterizes the non-summability of the sequence $\{\kappa_n\}$ by recalling $\kappa_n = h_n - d_n$.

\begin{lemma} Suppose Assumption~\ref{asm2} holds, then the sequence $\{\kappa_n\}$ is not summable.\label{lem8}
\end{lemma}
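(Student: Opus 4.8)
The plan is to show that $\sum_n \kappa_n = +\infty$ by bounding $\kappa_n = h_n - d_n$ from below by a non-summable sequence, using the explicit choice $h_n = d_n^{1/(1+\alpha)}$ together with the decay estimates on $d_n$ from Lemma~\ref{lem2}. First I would note that since $d_n \to 0$ (a consequence of (S1)), for all sufficiently large $n$ we have $d_n \le 1$, so $d_n^{1/(1+\alpha)} \ge d_n$ and moreover $\kappa_n = d_n^{1/(1+\alpha)} - d_n = d_n^{1/(1+\alpha)}\bigl(1 - d_n^{\alpha/(1+\alpha)}\bigr) \ge \tfrac12 d_n^{1/(1+\alpha)}$ once $d_n$ is small enough that $d_n^{\alpha/(1+\alpha)} \le \tfrac12$. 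Hence it suffices to prove that $\sum_n d_n^{1/(1+\alpha)}$ diverges.

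Next I would invoke the lower bound (S2) of Lemma~\ref{lem2}: surely, for all $n \ge n_s$, $d_n \ge d_s n^{-1/N}$, and therefore
\begin{align*}
d_n^{\frac{1}{1+\alpha}} \ge d_s^{\frac{1}{1+\alpha}}\, n^{-\frac{1}{N(1+\alpha)}}.
\end{align*}
The series $\sum_n n^{-\frac{1}{N(1+\alpha)}}$ diverges precisely when the exponent $\frac{1}{N(1+\alpha)} \le 1$, i.e. when $N(1+\alpha) \ge 1$, which always holds since $N \ge 1$ and $\alpha > 0$. Therefore $\sum_n d_n^{1/(1+\alpha)} = +\infty$ surely, and combining with the first step, $\sum_n \kappa_n \ge \tfrac12 \sum_{n \ge n_0} d_n^{1/(1+\alpha)} = +\infty$, which is the claim.

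I expect the only genuinely delicate point to be the role of Assumption~\ref{asm2} in the statement: the lemma is phrased as requiring it, but the argument above only really uses the sampling lemma and the parameter choice, so I would either remark that Assumption~\ref{asm2} is not needed here, or trace whether it is invoked implicitly through the well-definedness of the $\{\kappa_n\}$ sequence (e.g.\ to ensure $d_n$ and hence $h_n$, $\kappa_n$ are defined at every iteration). A secondary routine issue is making the "for $n$ large enough" thresholds explicit and reconciling the almost-sure statement (S1) with the sure statement (S2): since divergence of a tail series is insensitive to finitely many terms and (S2) holds surely, the final conclusion holds surely, with (S1) only used qualitatively to know $d_n \to 0$ (which itself follows surely from (S1)'s proof, or can be argued directly from the covering property). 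I would close by stating the conclusion cleanly: $\{\kappa_n\}$ is not summable.
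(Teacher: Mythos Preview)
Your proposal is correct and follows essentially the same route as the paper: factor $\kappa_n = h_n(1 - d_n/h_n) \ge \tfrac12 h_n$ for large $n$, then use the sure lower bound (S2) to get $h_n = d_n^{1/(1+\alpha)} \ge d_s^{1/(1+\alpha)} n^{-1/(N(1+\alpha))}$, and conclude by divergence of the $p$-series with $p = \tfrac{1}{N(1+\alpha)} < 1$. Your remark about Assumption~\ref{asm2} is apt: the paper's proof invokes the constant $D$ from that assumption only in a final sentence about a subsequence, but this plays no real role in establishing non-summability of the full sequence $\{\kappa_n\}$ as stated.
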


\begin{proof} Recall that Lemma~\ref{lem2} proves that $d_s(\frac{1}{n})^{\frac{1}{N}} \leq d_n \leq D_s(\frac{\log n}{n})^{\frac{1}{N}}$ for $n\geq \max\{n_s,N_s\}$. Since $\frac{d_n}{h_n}\rightarrow0$, there is $n_1\geq \max\{n_s,N_s\}$ such that the following holds for $n\geq n_1$: \begin{align}\kappa_n &= h_n - d_n = h_n(1-\frac{d_n}{h_n})\nnum\\
&\geq \frac{h_n}{2}\geq \frac{1}{2}(d_s)^{\frac{1}{1+\alpha}}
(\frac{1}{n})^{\frac{1}{N(1+\alpha)}}.\nnum\end{align} Let $\phi_n = \frac{1}{2}(d_s)^{\frac{1}{1+\alpha}}(\frac{1}{n})^{\frac{1}{N(1+\alpha)}}$, and then $\{\phi_n\}$ serves as a lower bound of $\{\kappa_n\}$ after a finite time; i.e., there is $N'\geq1$ such that $\kappa_n \geq \phi_n,\quad \forall n\geq N'$.

Let $\tau$ to be the smallest integer such that $\tau D \geq N'$. It is not difficult see that $\{\phi_n\}_{n\in \Psi,n\geq \tau D}$ and thus $\{\kappa_n\}_{n\in \Psi}$ are not summable. We then reach the desired result.
\end{proof}

The quantity $d_n$ in Lemma~\ref{lem2} serves as a measure of the sample density of $S_n$. The following lemma provides another characterization.

\begin{lemma} Consider any pair of $x,y\in \mathcal{X}$ with $\|x-y\|\leq\alpha_n$ and $x\neq y$. Then it holds that \begin{align}\mathcal{B}(x,\alpha_n)\cap\mathcal{B}(y,\alpha_n)\cap \big(S_n\setminus\{x,y\}\big)\neq\emptyset.\label{e44}\end{align}\label{lem4}
\end{lemma}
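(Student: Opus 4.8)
The plan is to show that the lens-shaped region $L:=\mathcal{B}(x,\alpha_n)\cap\mathcal{B}(y,\alpha_n)$ contains a closed ball of radius $d_n$ that omits both $x$ and $y$, and then to invoke the defining property of the dispersion $d_n$ --- every point of $\mathcal{X}_{\rm free}$ lies within $d_n$ of some point of $S_n$ --- to extract a sample out of that ball; such a sample automatically lies in $L\cap(S_n\setminus\{x,y\})$, which is exactly~\eqref{e44}.

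Concretely, set $\delta:=\|x-y\|$, so $0<\delta\le\alpha_n$, and let $m:=\tfrac{1}{2}(x+y)$, so $\|m-x\|=\|m-y\|=\delta/2$. By the triangle inequality $\mathcal{B}(m,\alpha_n-\delta/2)\subseteq L$, and the key arithmetic observation is that $\alpha_n=2d_n+\ell h_n d_n+M\ell h_n^2>2d_n$, whence $\alpha_n-\delta/2\ge\alpha_n/2>d_n$; thus $L$ genuinely contains a ball of radius strictly larger than $d_n$, and this excess is the slack used to dodge $x$ and $y$. To place the $d_n$-ball, pick (when $N\ge 2$) a unit vector $v$ orthogonal to $y-x$ and consider the center $p:=m+\varepsilon v$; then $\|p-x\|=\|p-y\|=\sqrt{\delta^2/4+\varepsilon^2}$, so the requirement $\mathcal{B}(p,d_n)\subseteq L$ becomes $\varepsilon^2\le(\alpha_n-d_n)^2-\delta^2/4$, while the requirement $x,y\notin\mathcal{B}(p,d_n)$ becomes $\varepsilon^2>d_n^2-\delta^2/4$. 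The upper bound is nonnegative because $\alpha_n-d_n\ge\alpha_n/2\ge\delta/2$, and the admissible window for $\varepsilon^2$ is nonempty precisely because $d_n^2<(\alpha_n-d_n)^2$, i.e. because $\alpha_n>2d_n$; moreover, when $\delta>2d_n$ one may simply take $\varepsilon=0$, i.e. $p=m$. Fixing such an $\varepsilon$ and applying the dispersion property at $p$ yields $z\in S_n$ with $\|z-p\|\le d_n$, hence $z\in\mathcal{B}(p,d_n)\subseteq L$ and $z\notin\{x,y\}$, as desired.

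The step I expect to be the crux is the geometric placement of the auxiliary ball: in the extreme regime $\delta\approx\alpha_n$ the lens $L$ is only about $2d_n$ wide along the $x$--$y$ axis, with $x$ and $y$ sitting essentially at its two tips, so one is forced to exploit the transverse extent of $L$ (hence the orthogonal direction $v$, genuinely using $N\ge 2$, or, for $N=1$, a separate direct argument along the line through $x$ and $y$) together with the strict margin $\alpha_n>2d_n$; the chain of inequalities above is precisely the bookkeeping that makes the $d_n$-ball fit while still excluding $x$ and $y$. A secondary point to verify is that the chosen $p$ lies in the region where the dispersion bound is available (e.g. $p\in\mathcal{X}_{\rm free}$), which is automatic once the grid is fine enough, or one restricts attention throughout to $L\cap\mathcal{X}_{\rm free}$.
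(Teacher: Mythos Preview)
Your argument is correct and self-contained (with the two caveats you already flag: the orthogonal displacement needs $N\ge 2$, and one must be in the region where the dispersion bound applies). It is, however, organized differently from the paper's proof. The paper does a three-way case split on $\delta=\|x-y\|$ against the thresholds $d_n$ and $2d_n$, and in each case places the auxiliary point on the segment $[x,y]$: for $\delta\le d_n$ it takes a sample in $\mathcal{B}(y,d_n)$, for $d_n<\delta\le 2d_n$ a sample in $\mathcal{B}(x,d_n)$, and for $\delta>2d_n$ the point $p$ at distance $d_n$ from $x$ along $y-x$, checking in each regime that the resulting $d_n$-ball sits in the lens and misses $\{x,y\}$. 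Your approach instead handles all distances at once by centering the ball at the midpoint and sliding it orthogonally; the single chain of inequalities $d_n^2-\delta^2/4<\varepsilon^2\le(\alpha_n-d_n)^2-\delta^2/4$ replaces the case analysis, and the key fact $\alpha_n>2d_n$ enters exactly once (to make the window nonempty) rather than separately in each case. The trade-off is that the paper's one-dimensional placement works verbatim when $N=1$, whereas your orthogonal shift does not; conversely, your argument is more uniform and makes the role of the strict margin $\alpha_n>2d_n$ completely transparent.
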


\begin{proof} We now distinguish three cases.

\emph{Case 1:} $\|x-y\|\leq d_n$. There is $z\in \mathcal{B}(y,d_n)\cap \big(S_n\setminus\{x,y\}\big)$. Since $\alpha_n > 2 d_n$, then $z\in \mathcal{B}(x,\alpha_n)$ and~\eqref{e44} holds.

\emph{Case 2:} $d_n<\|x-y\|\leq 2d_n$. There is $z\in \mathcal{B}(x,d_n)\cap \big(S_n\setminus\{x,y\}\big)$. Since $\alpha_n > 2 d_n$, then $z\in \mathcal{B}(y,\alpha_n)$ and~\eqref{e44} holds.

\emph{Case 3:} $2d_n<\|x-y\| \leq \alpha_n$. Let $p = x + d_n(y-x)$. Since $\alpha_n \geq 2d_n$, then $\mathcal{B}(p,d_n)\subseteq \mathcal{B}(x,\alpha_n)\cap\mathcal{B}(y,\alpha_n)$. Since $\mathcal{B}(p,d_n)\cap S_n\neq\emptyset$ and $\{x,y\}\cap\mathcal{B}(p,d_n)=\emptyset$, so~\eqref{e44} holds.

The combination of the above three cases establishes the lemma.
\end{proof}

\subsubsection{Convergence of fixed points}

Based on~\cite{PC-MQ-PSP:99}, the following lemma shows that
there is a unique fixed point $v_n^*$ of $\FF_n$. More importantly, $v_n^*$ provides a consistent approximation of $v^*$, and
converges to $v^*$ pointwise.

\begin{lemma} Suppose that Assumption~\ref{asm1} holds. The following properties hold:
\begin{enumerate}
\item[(Q1)] For each $n\geq1$, there is a fixed point $v^*_n$ of $\FF_n$; i.e., $v^*_n = \FF_n\circ v^*_n$, where $v^*_n : S_n \rightarrow [0,1]$;
\item[(Q2)] The sequence $v^*_n$ converges to $v^*$ pointwise; i.e., for any $x\in \mathcal{X}_{\rm free}$, it holds that \begin{align*}v^*(x) = \lim_{n\rightarrow+\infty}\min_{y\in \mathcal{B}(x,d_n)\cap S_n}v^*_n(y).\end{align*}
\end{enumerate}\label{lem5}
\end{lemma}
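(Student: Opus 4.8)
The plan is to transfer the known convergence results of~\cite{PC-MQ-PSP:99} for the fully-discretized operator $\GG_\rho$ (equivalently $\FF_\rho$, via the Kru\v{z}kov conjugacy noted in Remark~\ref{rem1}) to the random-sample grids $S_n$, using the generic machinery of Lemma~\ref{lem1} and Theorem~\ref{the2}. For (Q1), I would observe that $\FF_n$ is exactly the operator $\FF_\rho$ of Section~A specialized to the finite grids $X_d = S_n$, $U_d = U_n$, $W_d = W$ (with the sampling radii satisfying~\eqref{e41} with $d = d_n$, which is guaranteed by the definition of $d_n$ and Lemma~\ref{lem2}), and with $\alpha = \alpha_n > 2d_n > d_n$ so the operator is well-defined on the whole of $S_n$. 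Since $S_n$ is a finite set, the space $[0,1]^{S_n}$ is a complete metric space under the sup norm, and Theorem~\ref{the2} gives that $\FF_n$ is a contraction with modulus $e^{-\kappa_n} = e^{-(h_n-d_n)} \in (0,1)$. The Banach fixed-point theorem then yields the existence (and uniqueness) of $v_n^* : S_n \to [0,1]$ with $v_n^* = \FF_n \circ v_n^*$; the boundedness of the range follows because $\FF_n$ maps $[0,1]^{S_n}$ into itself ($1 - e^{-\kappa_n} + e^{-\kappa_n}\cdot(\text{something in }[0,1]) \in [0,1]$).

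For (Q2), the strategy is to quote the convergence statement from~\cite{PC-MQ-PSP:99} (the combination of their Theorem 5.2, Lemma 5.4 and the convergence theorem for the Page-232 scheme) which asserts that, as the joint discretization parameter $\rho = (d,h,\alpha)$ tends to $0$ along an admissible regime ($d = o(h)$, $\alpha = \Theta(d) + O(\ell h d) + O(M\ell h^2)$, which is exactly our choice $\alpha_n = 2d_n + \ell h_n d_n + M\ell h_n^2$ with $h_n = d_n^{1/(1+\alpha)}$), the fixed point of $\GG_\rho$ — hence of $\FF_\rho = \Psi \circ \GG_\rho \circ \Psi^{-1}$ — converges to $T^*$ (resp. $v^*$) in the appropriate pointwise/epigraphical sense. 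The point is that their result is proved for arbitrary grids satisfying the covering property~\eqref{e41}, not only for regular lattices; Lemma~\ref{lem2} guarantees that the random grids $S_n$ satisfy $d_n \to 0$ with probability one and that $d_n = o(h_n)$, so the sample sequence realizes an admissible discretization regime almost surely. Writing $w_n(x) \triangleq \min_{y \in \mathcal{B}(x,d_n)\cap S_n} v_n^*(y)$, the covering property forces $w_n$ to "see" the value of $v_n^*$ at a grid point within $d_n$ of $x$; combining the discrete convergence of $v_n^*$ towards the values of $v^*$ with the lower semicontinuity of $v^*$ (Theorem~\ref{the2}) and the fact that the $d_n$-balls shrink to $\{x\}$, I would conclude $w_n(x) \to v^*(x)$ for every $x \in \mathcal{X}_{\rm free}$.

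The main obstacle is the second part: one must be careful that the convergence $v_n^* \to v^*$ is \emph{not} uniform (indeed $v^*$ is only lower semicontinuous, so uniform convergence of the interpolations cannot hold near the discontinuity set), and the correct statement is the one-sided pointwise limit through the $\min$ over a shrinking ball. Concretely, the $\liminf$ direction — that $\liminf_n w_n(x) \ge v^*(x)$ — is the delicate one: it needs the stability/consistency estimate of the viability scheme, i.e. that $v_n^*$ cannot "drop below" $v^*$ in the limit, which is where the choice of the dilation $\alpha_n$ (large enough to dominate the one-step Lipschitz error $\ell h_n \cdot \alpha_{n-1}$ and the discretization-of-$F$ error $M\ell h_n^2$) is used; this is precisely the estimate carried out in~\cite{PC-MQ-PSP:99} for their fixed/multi-grid scheme, and the work here is to check that it applies verbatim to the sampled grids, using Lemma~\ref{lem4} to guarantee that neighboring balls $\mathcal{B}(x,\alpha_n)$ and $\mathcal{B}(y,\alpha_n)$ share a sample so that the discrete dynamics is "connected" enough to propagate values backward. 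The $\limsup$ direction — $\limsup_n w_n(x) \le v^*(x)$ — is easier: one constructs a near-optimal control for the angel from $x$, discretizes the resulting trajectory onto $S_n$ (possible up to error $O(d_n)$ per step by the covering property~\eqref{e41}), and checks that the discrete trajectory reaches the dilated goal neighborhood $\mathcal{B}(\mathcal{X}_{\rm goal}, Mh_n + d_n)$ in essentially the optimal number of steps, so that $w_n(x) \le \Psi(T^*(x)) + o(1) = v^*(x) + o(1)$.
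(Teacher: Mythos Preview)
Your proposal is correct and largely aligned with the paper's proof, which is itself very short: it notes $h_n\to 0^+$ and $d_n/h_n\to 0^+$, invokes Corollary~5.6 of~\cite{PC-MQ-PSP:99} to obtain the analogous statements (R1)--(R2) for the undiscounted operator $\GG_n$ and its fixed point $T_n^*$, and then pushes both conclusions through the Kru\v{z}kov transform $\Psi$ (using $\FF_n=\Psi\circ\GG_n\circ\Psi^{-1}$ and the monotonicity/continuity of $\Psi$) to get (Q1)--(Q2).

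The one genuine tactical difference is in (Q1). You obtain $v_n^*$ directly from the Banach fixed-point theorem, using the contraction estimate of Theorem~\ref{the2} for $\FF_n$ on the complete space $[0,1]^{S_n}$; this is more self-contained and also delivers uniqueness for free. The paper instead imports the fixed point $T_n^*$ of $\GG_n$ from~\cite{PC-MQ-PSP:99} (where existence comes from viability/discriminating-kernel arguments rather than a contraction, since $\GG_n$ acts on unbounded functions and is not a contraction) and then sets $v_n^*=\Psi\circ T_n^*$. Your route is cleaner here; just note that, as stated, Theorem~\ref{the2} compares $\FF_\rho\circ v$ only against $\FF_\rho\circ v^*$, so you are really invoking its \emph{proof} (the estimate~\eqref{e52} holds for arbitrary pairs $v,\bar v$) rather than its statement. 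For (Q2) both arguments are the same in substance---cite the convergence theorem of~\cite{PC-MQ-PSP:99}, check that the random grids $S_n$ realize an admissible discretization regime almost surely, and transfer via $\Psi$---with your write-up simply unpacking the $\liminf/\limsup$ mechanics that the paper leaves implicit. Your invocation of Lemma~\ref{lem4} is not needed at this stage; the paper uses it only later, in the proof of Lemma~\ref{lem3}.
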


\begin{proof} It is easy to see that $h_n\rightarrow0^+$ and $\frac{d_n}{h_n}\rightarrow0^+$. By following the same lines towards Corollary 5.6 in~\cite{PC-MQ-PSP:99}, one can show:\begin{enumerate}
\item[(R1)] For each $n\geq1$, there is a fixed point $T^*_n$ of $\GG_n$; i.e., $T^*_n = \GG_n\circ T^*_n$, where $T^*_n : S_n \rightarrow [0,1]$;
\item[(R2)] The sequence $T^*_n$ converges to $T^*$ pointwise; i.e., for any $x\in \mathcal{X}_{\rm free}$, it holds that \begin{align*}T^*(x) = \lim_{n\rightarrow+\infty}\min_{y\in \mathcal{B}(x,d_n)\cap S_n}T^*_n(y).\end{align*}
\end{enumerate}

Given the fixed point $T^*_n$, let $v^*_n = \Psi\circ T^*_n$. The property (Q1) holds via verifying the following: \begin{align*}&\FF_n\circ v^*_n = \FF_n\circ\Psi\circ T^*_n = \FF_n\circ\Psi\circ \Psi^{-1}\circ v^*_n\\
&= \Psi\circ\GG_n\circ\Psi^{-1}\circ v^*_n = \Psi\circ\GG_n\circ T^*_n = \Psi\circ T^*_n = v^*_n,\end{align*} where in the third equality we use $\FF_n = \Psi\circ\GG_n\circ\Psi^{-1}$, and in the fifth equality we use the fixed point property of $\GG_n\circ T^*_n = T^*_n$. One can easily verify (Q2) from (R2) by using the relation $v = \Psi\circ T$.
\end{proof}

For the discrete fixed point $v^*_{n-1}$, we define the interpolated fixed point $\tilde{v}^*_{n-1} : S_n \rightarrow
[0,1]$ as follows: \begin{align*}\tilde{v}^*_{n-1}(x) = v^*_{n-1}(x),\quad x\in S_{n-1},\quad \tilde{v}^*_{n-1}(y_n) =
1.\end{align*} The following lemma shows the interpolation of fixed points is consistent.

\begin{lemma} for any $x\in \mathcal{X}$, it holds that:\begin{align}\min_{y\in\mathcal{B}(x,\alpha_n)\cap S_n}\tilde{v}_{n-1}^*(y) = \min_{y\in\mathcal{B}(x,\alpha_n)\cap S_{n-1}}v_{n-1}^*(y).\label{e35}\end{align}\label{lem6}\end{lemma}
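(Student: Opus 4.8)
The plan is to reduce both sides of \eqref{e35} to minimizations over subsets of $S_{n-1}$, using that $S_n=S_{n-1}\cup\{y_n\}$, that $\tilde v^{*}_{n-1}$ coincides with $v^{*}_{n-1}$ on $S_{n-1}$, and that $\tilde v^{*}_{n-1}(y_n)=1$ is the largest value any of these functions can attain, since $v^{*}_{n-1}\colon S_{n-1}\to[0,1]$ by (Q1) of Lemma~\ref{lem5}. First I would fix $x\in\mathcal{X}$, write $\mathcal{B}(x,\alpha_n)\cap S_n=\big(\mathcal{B}(x,\alpha_n)\cap S_{n-1}\big)\cup\big(\mathcal{B}(x,\alpha_n)\cap\{y_n\}\big)$, and split into the two cases according to whether $y_n\in\mathcal{B}(x,\alpha_n)$.

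In the case $y_n\notin\mathcal{B}(x,\alpha_n)$ we have $\mathcal{B}(x,\alpha_n)\cap S_n=\mathcal{B}(x,\alpha_n)\cap S_{n-1}$, and since $\tilde v^{*}_{n-1}=v^{*}_{n-1}$ there, the two minima in \eqref{e35} are over the same finite set with identical values, so they agree. In the case $y_n\in\mathcal{B}(x,\alpha_n)$,
\[
\min_{y\in\mathcal{B}(x,\alpha_n)\cap S_n}\tilde v^{*}_{n-1}(y)
=\min\Big\{\tilde v^{*}_{n-1}(y_n),\ \min_{y\in\mathcal{B}(x,\alpha_n)\cap S_{n-1}}v^{*}_{n-1}(y)\Big\}
=\min\Big\{1,\ \min_{y\in\mathcal{B}(x,\alpha_n)\cap S_{n-1}}v^{*}_{n-1}(y)\Big\},
\]
and because the inner minimum is at most $1$ (values of $v^{*}_{n-1}$ lie in $[0,1]$), the outer $\min\{1,\cdot\}$ is redundant, leaving exactly the right-hand side of \eqref{e35}.

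The one genuinely non-cosmetic point, and the step I expect to be the main obstacle, is that in the second case the displayed identity only makes sense if $\mathcal{B}(x,\alpha_n)\cap S_{n-1}\neq\emptyset$; otherwise the left-hand side equals $\tilde v^{*}_{n-1}(y_n)=1$ while the right-hand side is a minimum over the empty set. This is precisely where Lemma~\ref{lem4} enters: for $x\neq y_n$, $y_n\in\mathcal{B}(x,\alpha_n)$ means $\|x-y_n\|\le\alpha_n$, so Lemma~\ref{lem4} applied to the pair $x,y_n$ produces a point $z\in\mathcal{B}(x,\alpha_n)\cap\mathcal{B}(y_n,\alpha_n)\cap\big(S_n\setminus\{x,y_n\}\big)$; then $z\in S_n$ and $z\neq y_n$ force $z\in S_{n-1}$, while $z\in\mathcal{B}(x,\alpha_n)$, so $\mathcal{B}(x,\alpha_n)\cap S_{n-1}\neq\emptyset$ and the reduction above goes through. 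The borderline value $x=y_n$ is handled analogously, invoking Lemma~\ref{lem4} with $y_n$ and an earlier sample lying within $\alpha_n$ of $y_n$, whose existence follows from the dispersion property of $S_{n-1}$ together with the sample-density bounds of Lemma~\ref{lem2} (recall $\alpha_n\ge 2d_n$). Since the identity has thus been verified for every $x\in\mathcal{X}$, this establishes \eqref{e35}.
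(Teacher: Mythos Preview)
Your proof is correct and follows essentially the same approach as the paper: both split into the cases $y_n\notin\mathcal{B}(x,\alpha_n)$ and $y_n\in\mathcal{B}(x,\alpha_n)$, invoke Lemma~\ref{lem4} in the second case to guarantee $\mathcal{B}(x,\alpha_n)\cap S_{n-1}\neq\emptyset$, and then use $\tilde v^{*}_{n-1}(y_n)=1\ge v^{*}_{n-1}$ to discard $y_n$ from the minimum. You are in fact slightly more careful than the paper, which applies Lemma~\ref{lem4} with $y=y_n$ without explicitly treating the degenerate case $x=y_n$ excluded by that lemma's hypothesis.
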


\begin{proof} When $y_n\notin\mathcal{B}(x,\alpha_n)$, then~\eqref{e35} holds. Now consider the case of $y_n\in\mathcal{B}(x,\alpha_n)$. By Lemma~\ref{lem4} with $y = y_n$, we have $\mathcal{B}(x,\alpha_n)\cap\mathcal{B}(y_n,\alpha_n)\cap S_{n-1}\neq\emptyset$. Recall $\tilde{v}_{n-1}(y_n) = 1 \geq \max_{x\in \mathcal{B}(y_n,\alpha_n)\cap S_{n-1}}v_{n-1}(x)$. This implies that \begin{align}\tilde{v}_{n-1}(y_n)\geq \min_{y\in\mathcal{B}(x,\alpha_n)\cap S_n}\tilde{v}_{n-1}^*(y),\nnum\end{align} and thus \begin{align}\tilde{v}_{n-1}(y_n)&\geq \min_{y\in\mathcal{B}(x,\alpha_n)\cap S_{n-1}}\tilde{v}_{n-1}^*(y)\nnum\\
&= \min_{y\in\mathcal{B}(x,\alpha_n)\cap S_{n-1}}v_{n-1}^*(y).\label{e42}\end{align} The relation~\eqref{e42} implies~\eqref{e35}.
\end{proof}

In Lemma~\ref{lem5}, we show the pointwise convergence of $v_n^*$ to $v^*$. The following lemma then shows that such
convergence holds for the whole set $\mathcal{X}$.


\subsection{Proof for Lemma~\ref{lem3}:}

By using the above intermediate lemmas, we now proceed to show Lemma~\ref{lem3}.

\begin{proof} Since~\eqref{e35} holds for any $x\in \mathcal{X}$, then we have \begin{align}\|\min_{y\in\mathcal{B}(x,\alpha_n)\cap S_n}\tilde{v}_{n-1}^*(y) - \min_{y\in\mathcal{B}(x,\alpha_n)\cap S_{n-1}}v_{n-1}^*(y)\|_{\mathcal{X}} = 0.\nnum\end{align} With this, we have \begin{align}&\|\min_{y\in\mathcal{B}(x,\alpha_n)\cap S_n}\tilde{v}_{n-1}^*(y) - \min_{z\in\mathcal{B}(x,\alpha_n)\cap S_n}v_n^*(z)\|_{\mathcal{X}}\nnum\\
&= \|\min_{y\in\mathcal{B}(x,\alpha_n)\cap S_{n-1}}v_{n-1}^*(y) - \min_{z\in\mathcal{B}(x,\alpha_n)\cap S_n}v_n^*(z)\|_{\mathcal{X}}.\label{e45}\end{align}
By Lemma~\ref{lem4}, we know that $\alpha_n \geq d_{n-1}$. Then it follows from~\eqref{e45} and (T2) Lemma~\ref{lem1} that \begin{align}&\|\min_{y\in\mathcal{B}(x,\alpha_n)\cap S_n}\tilde{v}_{n-1}^*(y) - \min_{z\in\mathcal{B}(x,\alpha_n)\cap S_n}v_n^*(z)\|_{\mathcal{X}}\nnum\\
&\leq \|\min_{y\in\mathcal{B}(x,d_{n-1})\cap S_{n-1}}v_{n-1}^*(y) - \min_{z\in\mathcal{B}(x,d_n)\cap S_n}v_n^*(z)\|_{\mathcal{X}}.\label{e49}\end{align}

Combine~\eqref{e49}, (Q2) in Lemmas~\ref{lem5}, and we reach the desired result.
\end{proof}

\subsection{Proof for Theorem~\ref{the1}}

We are now in the position to show Theorem~\ref{the1}.

\begin{proof} We first show (P1). By Theorem~\ref{the2}, we have: \begin{align}&\|v_n - v^*_n\|_{K_n} = \|F_n\circ \tilde{v}_{n-1} - F_n\circ v^*_n\|_{K_n}\nnum\\
&\leq \|F_n\circ \tilde{v}_{n-1} - F_n\circ v^*_n\|_{S_n}\nnum\\
&\leq e^{-\kappa_n}\|\min_{y\in\mathcal{B}(x,\alpha_n)\cap S_n}\tilde{v}_{n-1}(y) - \min_{z\in\mathcal{B}(x,\alpha_n)\cap S_n}v_n^*(z)\|_{\mathcal{X}}\nnum\\
&\leq e^{-\kappa_n}\big(\|\min_{y\in\mathcal{B}(x,\alpha_n)\cap S_n}\tilde{v}_{n-1}(y)\nnum\\
&- \min_{z\in\mathcal{B}(x,\alpha_n)\cap S_n}\tilde{v}_{n-1}^*(z)\|_{\mathcal{X}}\nnum\\
&+\|\min_{y\in\mathcal{B}(x,\alpha_n)\cap S_n}\tilde{v}_{n-1}^*(y) - \min_{z\in\mathcal{B}(x,\alpha_n)\cap S_n}v_n^*(z)\|_{\mathcal{X}}\big).\label{e11}\end{align}

Consider the first term on the right-hand side of~\eqref{e11}. We have the following estimates: \begin{align}&\|\min_{y\in\mathcal{B}(x,\alpha_n)\cap S_n}\tilde{v}_{n-1}(y) - \min_{z\in\mathcal{B}(x,\alpha_n)\cap S_n}\tilde{v}_{n-1}^*(z)\|_{\mathcal{X}}\nnum\\
&\leq \|\tilde{v}_{n-1} - \tilde{v}_{n-1}^*\|_{S_n}\nnum\\
&= \max\{\|v_{n-1} - v_{n-1}^*\|_{S_{n-1}}, \|\tilde{v}_{n-1}(y_n) - \tilde{v}_{n-1}^*(y_n)\|\}\nnum\\
&= \|v_{n-1} - v_{n-1}^*\|_{S_{n-1}},\label{e15}\end{align} where the last inequality is an application of~\eqref{e46}.

Substitute the relation of~\eqref{e15} into~\eqref{e11}, and it renders that \begin{align}&\|v_n - v^*_n\|_{K_n}\leq e^{-\kappa_n}\big(\|v_{n-1} - v^*_{n-1}\|_{S_{n-1}}\nnum\\
&+\|\min_{y\in\mathcal{B}(x,\alpha_n)\cap S_n}\tilde{v}_{n-1}^*(y) - \min_{z\in\mathcal{B}(x,\alpha_n)\cap S_n}v_n^*(z)\|_{\mathcal{X}}\big).\label{e6}\end{align}

For each $x\in S_n\setminus\big(K_n\cup\mathcal{B}(\mathcal{X}_{\rm goal},M h_n + d_n)\big)$, we then have the
 following: \begin{align}&\|v_n(x) - v^*_n(x)\|\nnum\\
&= \|\min_{y\in\mathcal{B}(x,\alpha_{n-1})\cap S_{n-1}}v_{n-1}(y) - \min_{z\in\mathcal{B}(x,\alpha_n)\cap S_n}v^*_n(z)\|\nnum\\
&\leq \|\min_{y\in\mathcal{B}(x,\alpha_{n-1})\cap S_{n-1}}v_{n-1}(y)\nnum\\
&- \min_{z\in\mathcal{B}(x,\alpha_{n-1})\cap S_{n-1}}v^*_{n-1}(z)\|\nnum\\
&+ \|\min_{y\in\mathcal{B}(x,\alpha_{n-1})\cap S_{n-1}}v^*_{n-1}(y) - \min_{z\in\mathcal{B}(x,\alpha_n)\cap S_n}v^*_n(z)\|\nnum\\
&\leq \|v_{n-1} - v^*_{n-1}\|_{S_{n-1}}\nnum\\
&+ \|\min_{y\in\mathcal{B}(x,\alpha_{n-1})\cap S_{n-1}}v^*_{n-1}(y) - \min_{z\in\mathcal{B}(x,\alpha_n)\cap S_n}v^*_n(z)\|_{\mathcal{X}}\nnum\\
&= \|v_{n-1} - v^*_{n-1}\|_{S_{n-1}}\nnum\\
&+ \|\min_{y\in\mathcal{B}(x,\alpha_n)\cap S_n}\tilde{v}^*_{n-1}(y) - \min_{z\in\mathcal{B}(x,\alpha_n)\cap S_n}v^*_n(z)\|_{\mathcal{X}},\label{e39}\end{align} where we apply Lemma~\ref{lem1} in the second inequality and Lemma~\ref{lem6} in the last inequality.

For each $x\in S_n\cap\mathcal{B}(\mathcal{X}_{\rm goal},M h_n + d_n)$, we have \begin{align}v_n(x) = v^*_n(x).\label{e40}\end{align}

Recall Assumption~\ref{asm2} that $S_n\subseteq\cup_{s=0}^DK_{n+s}$. With this property, we combine~\eqref{e6},~\eqref{e39} and~\eqref{e40}, and reach the following: \begin{align}&\|v_n(x) - v^*_n(x)\|_{S_n}\nnum\\
&\leq e^{-\kappa_n}\big(\|v_{n-D-1} - v^*_{n-D-1}\|_{S_{n-D-1}}+\gamma_{n-1}\big),\label{e47}\end{align} where the sequence $\{\gamma_n\}$ is defined in~\eqref{e17}.

Denote $\beta_n \triangleq \|v_n - v^*_n\|_{S_n}$. It follows from~\eqref{e47} that\begin{align}\beta_{n+1}
\leq e^{-\kappa_{n+1}}\big(\beta_{n-D}+\gamma_n\big).\label{e29}\end{align} Denote by $\xi_n \triangleq \frac{\beta_n}{\gamma_n^{1-\epsilon}}$. The following is derived from~\eqref{e29} for $n\geq D$: \begin{align}\xi_{n+1} &= \frac{\beta_{n+1}}{\gamma_{n+1}^{1-\epsilon}}\leq
\big(\frac{\gamma_{n-D}}{\gamma_{n+1}}\big)^{1-\epsilon}
e^{-\kappa_{n+1}}\big(\frac{\beta_{n-D}}{\gamma_{n-D}^{1-\epsilon}}+\frac{\gamma_n}{\gamma_{n-D}^{1-\epsilon}}\big)\nnum\\
&\leq C^{(D+2)(1-\epsilon)}e^{-\kappa_{n+1}}\big(\xi_{n-D}+\gamma_{n-D}^{\epsilon}\big).\label{e30}
\end{align}

In Lemma~\ref{lem2}, it is shown that $d_s(\frac{1}{n})^{\frac{1}{N}} \leq d_n \leq D_s(\frac{\log n}{n})^{\frac{1}{N}}$ for $n\geq \max\{n_s,N_s\}$. Since $\frac{d_n}{h_n}\rightarrow0$, there is $n_1\geq \max\{n_s,N_s\}$ such that the following holds for $n\geq n_1$: \begin{align}\kappa_n &= h_n - d_n = h_n(1-\frac{d_n}{h_n})\nnum\\
&\geq \frac{h_n}{2}\geq \eta(\frac{1}{n})^{\frac{1}{N(1+\alpha)}},\label{e23}\end{align} where $\eta\triangleq \frac{1}{2}(d_s)^{\frac{1}{1+\alpha}}$. Then, it follows from~\eqref{e30} and~\eqref{e23} that for $n\geq n_1$: \begin{align}\xi_{n+1}&\leq C^{(D+2)(1-\epsilon)}e^{-\eta(\frac{1}{n+1})^{\frac{1}{N(1+\alpha)}}}(\xi_{n-D} + \gamma_{n-D}^{\epsilon})\nnum\\
&\leq C^{(D+2)(1-\epsilon)}e^{-\eta\sum_{s=n_1}^{n+1}(\frac{1}{s})^{\frac{1}{N(1+\alpha)}}}\xi_1\nnum\\ &+C^{(D+2)(1-\epsilon)}\sum_{\tau=n_1}^{n+1}e^{-\eta\sum_{s=\tau}^{n+1}(\frac{1}{s})^{\frac{1}{N(1+\alpha)}}}
\gamma_{\tau}^{\epsilon}.\label{e12}\end{align}

We now proceed to show that $\xi_n\rightarrow0$ via~\eqref{e12}. Let $\delta_n\triangleq\gamma_n^{\epsilon}$. Since $\frac{1}{N(1+\alpha)} \in (0,1)$, the harmonic sequence of $\{(\frac{1}{s})^{\frac{1}{N(1+\alpha)}}\}_{s\geq1}$ is not summable. So $e^{-\eta\sum_{s=n_1}^{n+1}s^{-\frac{1}{N(1+\alpha)}}}\rightarrow0$ as $n\rightarrow+\infty$. We now consider the term of $\sum_{\tau=n_1}^{n+1}e^{-\eta\sum_{s=\tau}^{n+1}(\frac{1}{s})^{\frac{1}{N(1+\alpha)}}}
\delta_{\tau}$. Recall that $\{\delta_{\tau}\}$ diminishes. Pick any $\varepsilon>0$. There is $n_2 \geq n_1$ such that $\|\delta_{\tau}\|\leq\epsilon$ for all $n\geq n_2$. So we have \begin{align}&\sum_{\tau=n_1}^{n+1}e^{-\eta\sum_{s=\tau}^{n+1}s^{-\frac{1}{N(1+\alpha)}}}
\|\delta_{\tau}\|\nnum\\
&\leq \sum_{\tau=n_1}^{n_2}e^{-\eta\sum_{s=\tau}^{n+1}s^{-\frac{1}{N(1+\alpha)}}}
\|\delta_{\tau}\|\nnum\\
&+ \varepsilon\sum_{\tau=n_2+1}^{n+1}e^{-\eta\sum_{s=\tau}^{n+1}s^{-\frac{1}{N(1+\alpha)}}}.\label{e27}\end{align} Since $\{\delta_{\tau}\}$ converges, so is uniformly bounded. Then we have the following for the first term on the right-hand side of~\eqref{e27} by taking $n\rightarrow+\infty$: \begin{align}\sum_{\tau=n_1}^{n_2}e^{-\eta\sum_{s=\tau}^{n+1}s^{-\frac{1}{N(1+\alpha)}}}
\|\delta_{\tau}\| \rightarrow 0.\label{e13}\end{align} We now consider the second term on the right-hand side of~\eqref{e27}. Since $\{(\frac{1}{s})^{\frac{1}{N(1+\alpha)}}\}$ is monotonically decreasing, then we have \begin{align}&\sum_{s=\tau}^{n+1}s^{-\frac{1}{N(1+\alpha)}}
\geq\int_{\tau}^{n+1}x^{-\frac{1}{N(1+\alpha)}}dx\nnum\\
&= \frac{1}{1-\frac{1}{N(1+\alpha)}}\big((n+1)^{1-\frac{1}{N(1+\alpha)}}
-\tau^{1-\frac{1}{N(1+\alpha)}}\big)\nnum\\
&= \frac{\tau^{1-\frac{1}{N(1+\alpha)}}}{1-\frac{1}{N(1+\alpha)}}
\big((1+\frac{n+1-\tau}{\tau})^{1-\frac{1}{N(1+\alpha)}}
-1\big)\nnum\\
&=\frac{1}{1-\frac{1}{N(1+\alpha)}}(n+1-\tau)^{1-\frac{1}{N(1+\alpha)}}.\label{e56}\end{align}
By~\eqref{e56}, we have
\begin{align}&\sum_{\tau=n_2+1}^{n+1}e^{-\eta\sum_{s=\tau}^{n+1}s^{-\frac{1}{N(1+\alpha)}}}\nnum\\
&\leq\sum_{\tau=n_2+1}^{n+1}
e^{-\frac{\eta}{1-\frac{1}{N(1+\alpha)}}(n+1-\tau)^{1-\frac{1}{N(1+\alpha)}}}\nnum\\
&=\sum_{\tau=0}^{n-n_2}
e^{-\frac{\eta}{1-\frac{1}{N(1+\alpha)}}\tau^{1-\frac{1}{N(1+\alpha)}}}\nnum\\
&\leq\sum_{\tau=0}^{+\infty}
e^{-\frac{\eta}{1-\frac{1}{N(1+\alpha)}}\tau^{1-\frac{1}{N(1+\alpha)}}}<+\infty.\label{e14}\end{align}

%

Substitute~\eqref{e13} and~\eqref{e14} into~\eqref{e12}, and we have the following for all $n\geq1$: \begin{align}\xi_{n+1}\leq O(\varepsilon).\nnum\end{align} Take the limit on both sides of the above relation, and it renders that \begin{align}\limsup_{n\rightarrow+\infty}\xi_n\leq O(\varepsilon),\nnum\end{align} and then \begin{align}\lim_{n\rightarrow+\infty}\xi_n = O(\varepsilon).\label{e32}\end{align} Since~\eqref{e32} holds for any $\varepsilon >0$, we conclude that $\xi_n = \frac{\beta_n}{\gamma_n^{1-\epsilon}} \rightarrow0$ and then the second property of (P1) holds. This further implies that \begin{align}&\frac{1}{\gamma_n^{1-\epsilon}}\|\min_{y\in \mathcal{B}(x,\alpha_n)\cap S_n}v_n(y)
- \min_{z\in \mathcal{B}(x,\alpha_n)\cap S_n}v_n^*(z)\|_{\mathcal{X}}\nnum\\
&\leq \frac{\|v_n-v_n^*\|_{S_n}}{\gamma_n^{1-\epsilon}} \rightarrow 0,\nnum\end{align} where Lemma~\ref{lem2} is invoked. We then establish the second property of (P1). The property (P2) is a direct result of the combination of (Q2) in Lemma~\ref{lem5} and (P1).
\end{proof}

\end{document}